\documentclass{amsart}

\usepackage{amssymb, amsthm, amsfonts, amsmath}
\usepackage{hyperref}
\usepackage{enumitem}

\newtheorem{thm}{Theorem}[]
\newtheorem{proposition}[thm]{Proposition}
\newtheorem{lemma}[thm]{Lemma}

\theoremstyle{definition}

\theoremstyle{remark}
\newtheorem{remark}[thm]{Remark}

\newcommand{\R}{\mathbb{R}}
\newcommand{\C}{\mathbb{C}}
\newcommand{\CP}{\mathbb{CP}}
\newcommand{\mF}{\mathcal{F}}
\newcommand{\mO}{\mathcal{O}}
\newcommand{\p}{\partial}

\DeclareMathOperator{\Hom}{Hom}
\DeclareMathOperator{\End}{End}
\DeclareMathOperator{\Aut}{Aut}

\DeclareMathOperator{\pardeg}{par-deg}

\begin{document}

\title{Parabolic bundles and spherical metrics}

\author{Martin de Borbon}
\address{King's College London, Department of Mathematics\\
	Strand, London, WC2R 2LS, United Kingdom}
\email{martin.deborbon@kcl.ac.uk, dmitri.panov@kcl.ac.uk}

\author{Dmitri Panov}

\begin{abstract}
We use the Kobayashi-Hitchin correspondence for parabolic bundles to reprove the results of Troyanov \cite{Troy} and Luo-Tian \cite{LT} regarding existence and uniqueness of conformal spherical metrics on the Riemann sphere with prescribed cone angles in the interval \((0, 2\pi)\) at a given configuration of three or more points. 
\end{abstract}

\maketitle

\section{Introduction}

\subsection{Main result}
Fix \(n \geq 3\) distinct points \(x_i\) in \(\CP^1\)  together with real numbers \(0<\alpha_i<1\). Assume that the following numerical conditions are satisfied. 
\begin{itemize}
	\item Gauss-Bonnet:
	\begin{equation}\label{eq:GB}
	\sum_i (1-\alpha_i) < 2 .
	\end{equation}
	\item Stability: for every \(1 \leq j \leq n\) we have
	\begin{equation}\label{eq:S}
	1-\alpha_j < \sum_{i \neq j} (1-\alpha_i).
	\end{equation}
\end{itemize}

We apply the theory of parabolic bundles to prove the next.

\begin{thm}[{\cite{Troy, LT}}]\label{thm}
	There is a unique conformal spherical metric \(g\) on \(\CP^1\) with cone angles \(2\pi\alpha_i\) at \(x_i\).
\end{thm}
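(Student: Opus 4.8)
The plan is to realize a conformal spherical metric as the pullback of the round metric on $S^2=\CP^1$ under a developing map, and to produce that developing map from the Kobayashi--Hitchin correspondence applied to a carefully chosen parabolic bundle. Recall that a conformal metric of constant curvature $+1$ with cone angle $2\pi\alpha_i$ at $x_i$ is the same datum as a holomorphic immersion (the developing map) $f\colon \widetilde{M}\to S^2$ of the universal cover of $M=\CP^1\setminus\{x_1,\dots,x_n\}$, equivariant for a monodromy representation $\rho\colon \pi_1(M)\to PSU(2)$ whose local monodromy around $x_i$ is the rotation by $2\pi\alpha_i$, the metric being $g=f^*g_{S^2}$. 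Encoding $f$ as the graph line subbundle $L\subset V$ of the flat rank-two bundle $(V,\nabla)$ attached to $\rho$, the metric is spherical precisely when $\nabla$ is unitary for a Hermitian metric on $V$ and the second fundamental form $\beta\colon L\to (V/L)\otimes K_{\CP^1}(\sum_i x_i)$ is an isomorphism; this last condition is exactly the immersion condition for $f$.

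First I would package this data as a rank-two parabolic bundle. At each $x_i$ one prescribes the flag with weights $\{\tfrac{1-\alpha_i}{2},\tfrac{1+\alpha_i}{2}\}$, recording the eigenvalues $e^{\pm i\pi\alpha_i}$ of the local monodromy, and normalizes the underlying holomorphic bundle so that $\pardeg=0$ and the determinant is trivial; the holomorphic structure is the canonical ``oper'' extension adapted to the immersion condition. A direct computation then shows that the parabolic stability inequality $\pardeg(L')<0$ for every line subbundle $L'\subset V$ is equivalent to the numerical stability hypothesis \eqref{eq:S}, while \eqref{eq:GB} guarantees the positivity (equivalently, positive area) that makes the construction nonempty. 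Granting stability, the Mehta--Seshadri form of the Kobayashi--Hitchin correspondence furnishes a unique irreducible unitary flat connection $\nabla$ on this parabolic bundle, hence an irreducible $\rho\colon\pi_1(M)\to PSU(2)$ with the prescribed elliptic local monodromies.

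The main obstacle is to show that the flat unitary connection so produced really integrates to an honest spherical metric, i.e.\ that the developing map is an immersion with no spurious cone points and with cone angle exactly $2\pi\alpha_i$ at $x_i$. Concretely this is the assertion that $\beta$ is nowhere vanishing. I would argue this by degrees: $\beta$ is a holomorphic section of a line bundle whose degree is pinned down by $\sum_i(1-\alpha_i)$, and any zero of $\beta$ would produce a holomorphic line subbundle of $V$ of too large parabolic degree, contradicting the stability just established. The same bookkeeping shows the local exponents at $x_i$ differ by exactly $\alpha_i$, so the pulled-back metric has precisely the prescribed cone angles and no others.

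Finally, uniqueness. A spherical metric with the given angles at the given points determines, through its developing map, a holomorphic line subbundle $L\subset V$ with $\beta$ an isomorphism; the immersion condition rigidifies the underlying holomorphic bundle to the canonical oper bundle used above, and this parabolic bundle is determined by $(x_i,\alpha_i)$ alone. Mehta--Seshadri then gives a unique compatible unitary connection up to gauge, and since conjugate monodromy representations yield isometric developing maps compatible with the fixed conformal structure, the metric is unique. The point requiring care is that, because the relative $PSU(2)$ character variety is positive dimensional, uniqueness genuinely uses the complex structure: it is the \emph{holomorphicity} of the developing map, encoded in the oper structure, that singles out the one admissible monodromy producing a metric conformal to the fixed one.
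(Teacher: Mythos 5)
Your overall strategy coincides with the paper's: the same weights \(\frac{1\pm\alpha_i}{2}\), Mehta--Seshadri to produce the unitary flat connection, a developing-map picture for the metric, and uniqueness via uniqueness in Mehta--Seshadri; your condition ``\(\beta\) nowhere vanishing'' is the paper's transversality of the section \(\sigma\) to the Riccati foliation \(\mF\) (Lemma~\ref{lem:transverse}), proved there by the tangency count \(\mathrm{Tan}(\mF,\sigma)=\sigma^2+(n-2)=0\). However, there are genuine gaps. First, the input to Mehta--Seshadri is never actually specified. The theorem requires a holomorphic bundle \emph{and} flags \(F_i\subset E_{x_i}\) fixed in advance, and your ``canonical oper extension adapted to the immersion condition'' is circular: the second fundamental form, hence the immersion condition, is defined in terms of the very connection that Mehta--Seshadri is supposed to produce. (Your normalization is also inconsistent: with weights summing to \(1\) at each of the \(n\) points, \(\pardeg = \deg - n\), so trivial determinant would force \(\pardeg=-n\neq 0\).) The paper resolves this by taking \(E=\mO(1)\oplus\mO(n-1)\) and proving (Lemma~\ref{lem:flag}) that there is a unique flag configuration in general position --- each \(F_i\) transverse to \(\mO(n-1)\), and no degree-one subbundle containing all the \(F_i\) --- up to \(\Aut(E)\); this lemma is what makes the construction canonical. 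Second, ``stability \(\Leftrightarrow\) \eqref{eq:S}'' is false, and \eqref{eq:GB} is not about nonemptiness: stability depends on the flag positions, not only on the weights. If all the \(F_i\) were fibres of a single degree-one subbundle \(L\), then \(\pardeg L_*=1-\frac{1}{2}\sum_i(1-\alpha_i)>0\) precisely \emph{because of} \eqref{eq:GB}, so that parabolic bundle is unstable even though \eqref{eq:GB} and \eqref{eq:S} both hold. In the correct setup, \eqref{eq:GB} is exactly the stability inequality for the maximal subbundle \(\mO(n-1)\) (your oper line), while \eqref{eq:S} handles degree-one subbundles only after flag genericity is imposed (Proposition~\ref{prop:stability}).

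Third, your uniqueness paragraph asserts the crux rather than proving it. The claim that the immersion condition rigidifies the parabolic bundle of \(g'\) to one determined by \((x_i,\alpha_i)\) alone is exactly the hard part: one must extend the flat connection of \(g'\) over the punctures as a logarithmic connection with the correct residues (Deligne extension), compute \(\deg E'=n\) by the residue theorem, show the developing section has self-intersection \(-(n-2)\) by the same tangency count, conclude \(E'\cong\mO(1)\oplus\mO(n-1)\) by Birkhoff--Grothendieck, and verify that the flags cut out by the residues of \(\nabla'\) satisfy the two genericity conditions of Lemma~\ref{lem:flag} --- the second of which uses semi-stability of \(E'_*\) together with \eqref{eq:GB}. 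Only then does flag uniqueness identify \(E'_*\) with \(E_*\), so that Mehta--Seshadri uniqueness gives \(\nabla'=\nabla\) and \(g'=g\). Without these steps your argument does not close; with them it becomes the paper's proof.
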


The statement of Theorem \ref{thm} has the following meaning.
For every point in the open set \(\CP^1 \setminus \{x_1, \ldots, x_n\}\) we can find a centred complex coordinate \(z\) such that \(g\) is equal to the constant curvature \(1\) metric on the Riemann sphere
\begin{equation}\label{eq:sphereformula}
	\frac{2}{1+|z|^2} |dz| .
\end{equation} 
On the other hand,
at a cone point \(x_i\) we can find a centred complex coordinate \(z\) such that \(g\) is the pull-back of \eqref{eq:sphereformula} by the map \(z \mapsto z^{\alpha_i}\).

\begin{remark}
	The existence part of Theorem \ref{thm} was first proved by Troyanov \cite{Troy} via variational PDE methods. Soon after Luo-Tian \cite{LT} gave a more geometric existence proof appealing to Alexandrov's embedding theorem, moreover they also established uniqueness. The story for spherical metrics with cone angles bigger than \(2\pi\) is far more intricate, see \cite{eremenko} for an up to date survey on the subject.
\end{remark}

\subsection{Related work}
Lingguang Li, Jijian Song and Bin Xu \cite{LSX2} study spherical metrics on Riemann surfaces of genus \(\geq 1\) with cone angles equal to integer multiples of \(2\pi\) using  rank \(2\) holomorphic vector bundles. Semin Kim and Graeme Wilkin \cite{KW} analyse hyperbolic metrics on Riemann surfaces of genus \(\geq 2\) with cone angles in the interval \((0, 2\pi)\) by means of parabolic Higgs bundles.

\begin{remark}
	Other proofs of Theorem \ref{thm} can be derived from more recent developments on singular K\"ahler-Einstein metrics, see the survey \cite{Rubinstein} and references therein. (A new proof of uniqueness is also given in \cite{Bartoluccietal}.)
	Our main motivation is to extend the parabolic bundle technique to higher dimensions and apply it in the study of constant holomorphic sectional curvature metrics with cone singularities at complex hypersurfaces, as started in \cite{Pan} for the flat case.
\end{remark}

\subsection{Outline}
Sections \ref{sect:bundle}, \ref{sect:flag}, \ref{sect:parab} and \ref{sect:stab} construct a stable rank \(2\) parabolic bundle \(E_{*}\) with weights \(a_{i1}, a_{i2}\) at \(x_i\) determined by  \(\alpha_i\). In Section \ref{sect:conect} we apply the Kobayashi-Hitchin correspondence to obtain a  unitary logarithmic connection \(\nabla\) adapted to the parabolic structure. In Section \ref{sect:fol} we analyse the singular foliation \(\mF\) on the projectivized bundle \(\mathbb{P}(E)\) defined by the horizontal distribution of the connection \(\nabla\). In Section \ref{sect:sec} we show that there is a unique section \(\sigma: \CP^1 \to \mathbb{P}(E)\) transversal to \(\mF\), its image is the unique curve with negative self-intersection of the Hirzebruch surface \(\mathbb{P}(E)\). In Section \ref{sect:sphmet} we obtain \(g\) as the pull-back by \(\sigma\) of the Fubini-Study metric on the fibres (normalized to have constant curvature \(1\)).

Uniqueness is established in Section \ref{sect:uniq} by reversing the arguments used in the existence part.
A conformal spherical metric \(g'\) with cone angles \(2\pi\alpha_i\) at the points \(x_i\) induces a holomorphic unitary connection \(\nabla'\) on the trivial rank \(2\) bundle over the punctured sphere. We extend \(\nabla'\) over the points \(x_i\) with logarithmic singularities and semi-simple residues with eigenvalues \(a_{i1}, a_{i2}\). We obtain a semi-stable parabolic vector bundle \(E'_{*}\). We show that there is a (unique up to scale) isomorphism between \(E'_*\) and \(E_*\). The developing map of \(g'\) defines a section \(\sigma': \CP^1 \to \mathbb{P}(E')\) which is identified with \(\sigma\) under the above isomorphism. Since \(E_*\) is stable, the uniqueness part of the Kobayashi-Hitchin correspondence guarantees that the connections \(\nabla'\) and \(\nabla\) agree under the isomorphism hence \(g'=g\).

\subsection*{Acknowledgments}
DP thanks Gabriele Mondello for many years of discussions on spherical metrics.
This work was supported by 
EPSRC Project EP/S035788/1, \emph{K\"ahler manifolds of constant curvature with conical singularities}.

\section{Main constructions}
We access the spherical metric by using a well known fibre bundle description of projective structures, which in general lines can be described as follows.
A geometric structure on a manifold \(M\) modelled on \(A\) with transition functions in \(\mathcal{A}\) can be viewed as a fibre bundle over \(M\) with fibre \(A\) and structure group \(\mathcal{A}\), together with a section and a foliation transverse to the fibres and to the section, see \cite{SullThu}. For projective structures on Riemann surfaces we have \(A=\CP^1\) with \(\mathcal{A} = PSL(2, \C)\) and all objects involved are holomorphic.
With this background in mind, we proceed as explained in the outline. 

\subsection{The bundle}\label{sect:bundle}
Consider the rank \(2\) vector bundle
\begin{equation}\label{eq:vbE}
E = \mO(1) \oplus \mO(n-1) .
\end{equation}

Write \(\Aut(E)\) for the automorphism group of \(E\).
It is the subset of \(H^0(\End E)\) made of elements \(\Phi\) of the form
\begin{equation}\label{eq:autE}
\Phi =
\begin{pmatrix}
\lambda_1 & 0 \\
P & \lambda_2
\end{pmatrix}
\end{equation}
with \(\lambda_1, \lambda_2 \in \C^*\) and \(P \in H^0(\mO(n-2))\).

\begin{lemma}\label{lem:subbundles}
	Let \(L \subset E\) be a line bundle of positive degree \(d = \deg L\).
	\begin{enumerate}[label=(\alph*)]
		\item If \(d >1\) then
		\(L\) is equal to the direct summand \(\mO(n-1)\).
		\item If \(d =1\) then \(L\) is the image of  \(\mO(1)\) by an element of \(\Aut(E)\).
	\end{enumerate}
\end{lemma}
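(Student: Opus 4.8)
The plan is to translate the notion of a line subbundle into a nowhere-vanishing section and then do elementary degree bookkeeping on \(\CP^1\). Since every line bundle on \(\CP^1\) is determined by its degree, we have \(L \cong \mO(d)\), and the inclusion \(L \hookrightarrow E\) is exactly a holomorphic bundle map \(L \to E\) that is injective on every fibre, i.e.\ a nowhere-vanishing section of \(\Hom(L, E) = L^{-1} \otimes E \cong \mO(1-d) \oplus \mO(n-1-d)\). Such a section is a pair \((s_1, s_2)\) with \(s_1 \in H^0(\mO(1-d))\) and \(s_2 \in H^0(\mO(n-1-d))\), and the subbundle condition is that \(s_1\) and \(s_2\) have no common zero. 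I will use the two standard facts on \(\CP^1\): that \(H^0(\mO(k)) = 0\) for \(k < 0\), and that a nonzero section of \(\mO(k)\) vanishes at exactly \(k\) points, so it is nowhere vanishing precisely when \(k = 0\).

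For part (a), suppose \(d > 1\). Then \(1 - d < 0\) forces \(s_1 = 0\), so the inclusion \(L \hookrightarrow E\) factors through the second summand via a necessarily nonzero \(s_2 \in H^0(\mO(n-1-d))\). The subbundle condition now says that \(s_2\) is itself nowhere vanishing; since a negative degree admits no nonzero section and a positive degree forces zeros, this requires \(n - 1 - d = 0\), i.e.\ \(d = n-1\) with \(s_2\) a nonzero constant. Hence \(L \cong \mO(n-1)\) is carried isomorphically onto the summand \(\mO(n-1) \subset E\), which is the assertion. (In particular no line subbundle of degree strictly between \(1\) and \(n-1\) exists.)

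For part (b), suppose \(d = 1\). Then \(s_1 \in H^0(\mO) = \C\) is a constant and \(s_2 \in H^0(\mO(n-2))\). I first rule out \(s_1 = 0\): were it zero, the subbundle condition would force \(s_2\) to be a nowhere-vanishing section of \(\mO(n-2)\), which is impossible since \(n - 2 \geq 1\). So \(s_1 \in \C^*\), and then \((s_1, s_2)\) is automatically nowhere vanishing because its first entry is a nonzero constant. Taking \(\Phi \in \Aut(E)\) of the form \eqref{eq:autE} with \(\lambda_1 = s_1\), \(P = s_2\) and \(\lambda_2 = 1\), the composition of the summand inclusion \(\mO(1) \hookrightarrow E\) with \(\Phi\) is exactly the map \((s_1, s_2)\); hence \(\Phi\) carries the first summand \(\mO(1)\) onto \(L\).

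The only point requiring care is the reduction in the first paragraph: that a line subbundle (a saturated subsheaf with locally free quotient) is the same as a fibrewise-injective, equivalently nowhere-vanishing, bundle map, so that the whole question becomes the vanishing analysis of \(s_1\) and \(s_2\). Once this dictionary is in place both cases follow immediately from the degree count, and no genuine obstacle remains.
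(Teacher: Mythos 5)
Your proof is correct and takes essentially the same approach as the paper: your pair \((s_1,s_2)\), a nowhere-vanishing section of \(L^{-1}\otimes E \cong \mO(1-d)\oplus\mO(n-1-d)\), is exactly the paper's pair of projections \(\sigma_1 \in H^0(L^*\otimes \mO(1))\), \(\sigma_2 \in H^0(L^*\otimes \mO(n-1))\) with no common zero, and the degree bookkeeping and the construction of \(\Phi\) are the same. If anything you are slightly more explicit than the paper in part (b), where you rule out \(s_1 = 0\) using \(n-2 \geq 1\) before inverting it, a step the paper leaves implicit.
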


\begin{proof}
	Consider the two projections of \(L\) to the components \(\mO(1)\) and \(\mO(n-1)\) of \(E\) given by Equation \eqref{eq:vbE}. We obtain two sections
	\[\sigma_1 \in H^0(L^{*} \otimes \mO(1)) \hspace{2mm} \mbox{ and } \hspace{2mm} \sigma_2 \in H^0(L^* \otimes \mO(n-1)) \]
	with no common zero. 
	
	If \(d>1\) then \(L^* \otimes \mO(1)\) has negative degree, so \(\sigma_1 = 0\) and \(L=\mO(n-1)\).
	
	If \(d=1\) then \(\sigma_1\) is an isomorphism between \(L\) and \(\mO(1)\). 
	The automorphism \(\Phi\) given by Equation \eqref{eq:autE} with \(\lambda_1=\lambda_2=1\) and \(P = \sigma_2 \circ \sigma_1^{-1}\)
	takes \(\mO(1)\) to \(L\).
\end{proof}

\subsection{The flag}\label{sect:flag}
We consider tuples \( \mathbf{F}  =\{F_1, \ldots, F_n\}\) of lines \(F_i \subset E_{x_i}\) (where \(E_x\) denotes the fibre of \(E\) at the point \(x\)) up to the action of \(\Aut(E)/ \C^*\).
We show that there is a unique open orbit on which the action is free and transitive. 

\begin{lemma}\label{lem:flag}
	Up to the action of \(\Aut(E)\) there is a unique set \(\mathbf{F}  =\{F_1, \ldots, F_n\}\) of lines \(F_i \subset E_{x_i}\) satisfying the following properties:
	\begin{itemize}
		\item[(i)] the intersection of \(F_i\) with \(\mO(n-1)\) is zero for all \(i\);
		\item[(ii)] there is no degree \(1\) line bundle \(L \subset E\) that contains all \(F_i\).
	\end{itemize}
\end{lemma}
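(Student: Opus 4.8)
The plan is to translate the entire question into affine-linear algebra on \(\C^n\). By condition (i) each \(F_i\) is transverse to the summand \(\mO(n-1)_{x_i}\), hence is the graph of a unique homomorphism \(v_i \in \Hom(\mO(1)_{x_i}, \mO(n-1)_{x_i}) = \mO(n-2)_{x_i}\). Fixing trivializations of the lines \(\mO(n-2)_{x_i}\), the tuples \(\mathbf{F}\) obeying (i) are then in bijection with points \(\mathbf{v}=(v_1,\dots,v_n) \in \bigoplus_i \mO(n-2)_{x_i} \cong \C^n\). I would first record this dictionary explicitly.

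Next I would compute the induced action. Writing \(\Phi\) as in \eqref{eq:autE} and using the central \(\C^*\) (which acts trivially on lines) to normalize \(\lambda_1 = 1\), a direct calculation on the graph of \(v_i\) shows that \(\Phi\) sends the slope \(v_i\) to \(\lambda_2 v_i + P(x_i)\). Thus, after identifying \(\Aut(E)/\C^*\) with \(\C^* \ltimes H^0(\mO(n-2))\), the action on \(\C^n\) is the affine action \(\mathbf{v} \mapsto \lambda_2 \mathbf{v} + \mathrm{ev}(P)\), where \(\mathrm{ev}\colon H^0(\mO(n-2)) \to \C^n\) is evaluation at the points \(x_i\). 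The key numerical input is that \(\mathrm{ev}\) is injective with image a hyperplane \(W \subset \C^n\): a nonzero section of \(\mO(n-2)\) has only \(n-2\) zeros and so cannot vanish at all \(n\) points \(x_i\), giving \(\dim W = n-1\).

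The last step is to interpret condition (ii) and conclude. By Lemma \ref{lem:subbundles}(b) every degree \(1\) line subbundle \(L \subset E\) is the image of \(\mO(1)\) under an automorphism, hence is the graph of a unique slope \(Q \in H^0(\mO(n-2))\); such an \(L\) contains all the \(F_i\) precisely when \(v_i = Q(x_i)\) for every \(i\), i.e.\ when \(\mathbf{v} = \mathrm{ev}(Q) \in W\). Therefore the tuples satisfying both (i) and (ii) correspond exactly to the hyperplane complement \(\C^n \setminus W\). It remains to check that the affine group \(\C^* \ltimes W\) acts freely and transitively on \(\C^n \setminus W\); this I would do by projecting to the one-dimensional quotient \(\C^n / W\), on which the action is \(\bar{\mathbf{v}} \mapsto \lambda_2 \bar{\mathbf{v}}\) with \(\bar{\mathbf{v}} \neq 0\), fixing \(\lambda_2\) uniquely, and then solving for the translation \(w \in W\) uniquely. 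This yields existence (the complement is nonempty since \(W\) is proper), uniqueness up to \(\Aut(E)\), and freeness simultaneously.

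I do not expect a serious obstacle once the action is correctly linearized, since the result then reduces to the statement that an affine group acts simply transitively on the complement of its invariant hyperplane. The one point demanding care is the bookkeeping of trivializations, so that the translation part of the action is genuinely identified with \(\mathrm{ev}(H^0(\mO(n-2)))\) rather than a twisted copy; getting the codimension-one count \(\dim W = n-1\) right is precisely what makes the locus a single free orbit instead of a positive-dimensional family.
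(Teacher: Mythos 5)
Your proposal is correct, and it organizes the argument differently from the paper. The paper proceeds by fixing an explicit representative flag ($F_i$ equal to the fibre of $\mO(1)$ at $x_i$ for $i \le n-1$, and $F_n$ generic), verifying (i) and (ii) for it, and then constructing in closed form an automorphism carrying any flag satisfying (i) and (ii) onto this representative; the key linear-algebra input there is that evaluation of $H^0(\mO(n-2))$ at the $n-1$ points $x_1, \ldots, x_{n-1}$ is an isomorphism, which is used to kill the first $n-1$ slopes, after which condition (ii) handles $F_n$ and a diagonal rescaling finishes. You instead treat all $n$ points symmetrically: flags satisfying (i) are points of $\C^n$, the $\Aut(E)/\C^*$-action is the affine action $\mathbf{v} \mapsto \lambda_2 \mathbf{v} + \mathrm{ev}(P)$, failure of (ii) is exactly membership in the invariant hyperplane $W = \mathrm{ev}\bigl(H^0(\mO(n-2))\bigr)$ (your input is the cousin fact that evaluation at all $n$ points is injective of corank one), and the lemma reduces to the simple transitivity of $\C^* \ltimes W$ on $\C^n \setminus W$. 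Your computation of the action, the dictionary for degree-one subbundles via Lemma \ref{lem:subbundles}(b), and the orbit analysis are all sound, and the trivialization bookkeeping you flag is genuinely unproblematic since both the slopes $v_i$ and the translations $P(x_i)$ live in the same lines $\mO(n-2)_{x_i}$. What your framing buys: it proves at one stroke the stronger statement announced just before the lemma --- that the flags satisfying (i) and (ii) form a single orbit on which the action of $\Aut(E)/\C^*$ is free and transitive --- whereas the paper's proof only asserts freeness in passing (``unique up to dilation''); it also exhibits the locus violating (ii) as precisely the hyperplane $W$. What the paper's version buys is explicitness: the normalizing automorphism is produced concretely, which is convenient for downstream computation. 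Both arguments ultimately rest on the same numerical fact, namely that a nonzero section of $\mO(n-2)$ has only $n-2$ zeros.
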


\begin{proof}
	Define \(\mathbf{F}=\{F_1, \ldots, F_n\}\) where
	\(F_i = \) fibre of \(\mO(1)\) at \(x_i\) for \(i=1, \ldots, n-1\) and \(F_n =  \C \cdot v_n\) where \(v_n\) is a fixed vector in
	\(E_{x_n}\) which does not belong to neither \(\mO(1)\) nor \(\mO(n-1)\). It is clear that \(\mathbf{F}\)  satisfies \((i)\).
	
	Let \(L \subset E\) be a degree one line bundle with \(F_i \subset L\) for \(i=1, \ldots, n-1\). By Lemma \ref{lem:subbundles} we have \(L = \Phi \cdot \mO(1)\) with \(\Phi \in \Aut(E)\) of the form given by Equation \eqref{eq:autE}. 
	Since \(L\) agrees with \(\mO(1)\) at \(n-1\) points it follows that \(P=0\). Hence \(L=\mO(1)\) and therefore \(F_n \not\subset L\). We conclude that \(\mathbf{F}\) satisfies \((ii)\).

	Suppose \(\mathbf{F}'\) is a collection of lines satisfying properties \((i)\) and \((ii)\). We claim that there is a (unique up to dilation) automorphism \(\Phi\) that sends \(\mathbf{F}'\) to \(\mathbf{F}\). Take non-zero vectors \(v'_i\) such that \(F'_i = \C \cdot v_i'\). Let \(\sigma_1 v'_i\) and \(\sigma_2 v'_i\) be the respective projections to \(\mO(1)\) and \(\mO(n-1)\). Property \((i)\) implies that \(\sigma_1 v_i' \neq 0\) for all \(i\).
	The evaluation map that takes a section of \(\Hom(\mO(1), \mO(n-1))\) to its values at the first \(n-1\) projections
	\[P \mapsto (P(\sigma_1 v'_1), \ldots, P(\sigma_1 v'_{n-1}) )\]
	defines a linear isomorphism 
	\[H^0(\mO(1)^* \otimes \mO(n-1)) \xrightarrow{\sim} \mO(n-1)_{x_1} \times \ldots \times \mO(n-1)_{x_{n-1}} . \]
	Let \(P \in H^0(\mO(n-2))\) be (the unique section) such that \(P(\sigma_1 v_i') = -\sigma_2 v'_i \) for \(i=1, \ldots, n-1\). The endomorphism
	\[ \Phi =  
	\begin{pmatrix}
	1 & 0 \\
	P & 1
	\end{pmatrix}
	\in \Aut(E)
	\]
	sends the first \(n-1\) lines \(F_i'\) to \(\mO(1)\). Property \((ii)\) implies that \(\Phi F'_n \not\subset \mO(1)\), i.e. \(\sigma_2 \Phi v_n' \neq 0\).
	Since \(\sigma_1 \Phi v_n' = \sigma_1 v_n' \neq 0 \),
	we can take  \(\lambda_1, \lambda_2 \in \C^*\) so that the composition
	\[
	\begin{pmatrix}
	\lambda_1 & 0 \\
	0 & \lambda_2
	\end{pmatrix}
	\circ \Phi
	\]
	maps \(\mathbf{F}'\) to \(\mathbf{F}\).
\end{proof}

\subsection{The parabolic structure}\label{sect:parab}
We follow the conventions adopted in Mochizuki's book \cite[Chapter 3]{Mochizuki}. See also \cite[Section 6]{Pan}.\footnote{Section 6 in \cite{Pan} considers the case where the base manifold has complex dimension \(2\). The same results hold for our complex dimension \(1\) case.} 

For each \(\alpha_i\) we define two weights given by
\begin{equation}\label{eq:weights}
a_{i1} = \frac{1-\alpha_i}{2} \hspace{2mm} \mbox{ and } \hspace{2mm} a_{i2} = \frac{1+\alpha_i}{2} .
\end{equation}
The weights \(0 < a_{i1} < a_{i2} < 1\) satisfy \(a_{i1}+a_{i2} =1\) and \(a_{i2}-a_{i1}=\alpha_i\).

The flag \(\mathbf{F}=\{F_1, \ldots, F_n\}\) of Lemma \ref{lem:flag} together with the weights given by Equation \eqref{eq:weights} determine a parabolic bundle
\(E_*\). Following \cite[Definition 6.1]{Pan} the parabolic  structure is made of increasing filtrations of \(E\) by locally free sub-sheaves \(E^i_t\) for \(0 < t \leq 1\). The sheaves \(E^i_t\) are defined as:
\begin{itemize}
	\item \(0 < t < a_{i1}\) sections of \(E\) that vanish at \(x_i\);
	\item \(a_{i1} \leq t < a_{i2}\) sections of \(E\) tangent to \(F_i\);
	\item \(a_{i2} \leq t \leq 1\) sections of \(E\).
\end{itemize}
The families \(E^i_t\) can be extended to the whole \(\R\) by the rule \(E^i_{t+1}=E^i_t\otimes \mO(x_i)\).

\begin{remark}
	In our conventions
	the smallest weight \(a_{i1}\) at \(x_i\) is associated with the line \(F_i\) and the largest weight \(a_{i2}\) is associated with \(E_{x_i}/F_i\), see \cite[Section 2]{LSS}. Note that the articles \cite{MS, Biquard} use a convention different from ours. 
\end{remark}

\begin{lemma}
	The parabolic degree of \(E_{*}\) is zero.
\end{lemma}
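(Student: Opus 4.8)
The plan is to read off the parabolic degree directly from the definition used in \cite[Chapter 3]{Mochizuki}, according to which \(\pardeg(E_*)\) equals \(\deg(E)\) corrected by a local contribution at each marked point \(x_i\) coming from the weights of the filtration \(\{E^i_t\}\) described above. Since all of the data is explicit, the computation reduces to two ingredients: the degree of the underlying bundle \(E\) and, for each \(i\), the total weight attached to the flag at \(x_i\).

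First I would record that \(\deg E = \deg \mO(1) + \deg \mO(n-1) = n\), and that by \eqref{eq:weights} the two weights at every point satisfy \(a_{i1} + a_{i2} = \tfrac{1-\alpha_i}{2} + \tfrac{1+\alpha_i}{2} = 1\). To identify the local contribution I would use the three-step filtration: on \((0,a_{i1})\) the sheaf \(E^i_t\) consists of the sections vanishing at \(x_i\), on \([a_{i1},a_{i2})\) it is the sheaf of sections tangent to \(F_i\), and on \([a_{i2},1]\) it is \(E\) itself; these are elementary modifications of \(E\) of degrees \(n-2\), \(n-1\) and \(n\). Integrating, \(\int_0^1 \deg(E^i_t)\,dt = a_{i1}(n-2) + (a_{i2}-a_{i1})(n-1) + (1-a_{i2})n = \deg E - (a_{i1}+a_{i2})\), so the local contribution at each \(x_i\) is \(-(a_{i1}+a_{i2})\).

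Summing over \(i\) then gives \(\pardeg(E_*) = \deg E - \sum_i (a_{i1}+a_{i2}) = n - n = 0\). The only genuinely delicate point is the bookkeeping of the sign convention: in Mochizuki's convention the weights enter the degree with a minus sign, which is visible from the per-point identity \(\int_0^1 \deg(E^i_t)\,dt = \deg E - (a_{i1}+a_{i2})\); one must also be careful that the rank-\(2\) elementary modification \(E(-x_i)\) drops the degree by \(2\) rather than by \(1\). Once these conventions are fixed the vanishing is immediate, reflecting the fact that the normalization \(a_{i1}+a_{i2}=1\) at the \(n\) marked points exactly cancels \(\deg E = n\).
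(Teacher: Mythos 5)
Your proof is correct and follows essentially the same route as the paper: both reduce to the identity \(\pardeg E_{*} = \deg E - \sum_i (a_{i1}+a_{i2}) = n - n = 0\), the paper simply citing this formula as the definition of parabolic degree while you additionally verify it by integrating the degrees \(n-2,\, n-1,\, n\) of the filtration sheaves \(E^i_t\) against the weight intervals. That verification (including the point that \(E(-x_i)\) drops the degree by the rank, i.e.\ by \(2\)) is accurate and consistent with the paper's conventions, so nothing is missing.
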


\begin{proof}
	According to \cite[Definition 6.3]{Pan} we have
	\begin{align*}
	\pardeg E_{*} &= \deg E - \sum_i (a_{i1}+a_{i2}) \\
	&=  n - \sum_{i=1}^{n}1 = 0 . \qedhere
	\end{align*}
\end{proof}

\subsection{Stability}\label{sect:stab}
Let \(L \subset E\) be a line bundle. The parabolic structure on \(E\) induces one on \(L\) and we have
\begin{equation}\label{eq:pardegL}
\pardeg L_{*} = \deg L - \sum_{F_i \subset L} a_{i1} - \sum_{F_i \not\subset L} a_{i2} .
\end{equation}
The more lines \(F_i \subset L\) the bigger \(\pardeg L_{*}\) is.

\begin{proposition}\label{prop:stability}
	The parabolic bundle \(E_{*}\) is stable, i.e. for every line bundle \(L \subset E\) we have \(\pardeg L_{*}<0\).
\end{proposition}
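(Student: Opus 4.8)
The plan is to run through all line subbundles $L \subset E$ according to their degree, using formula \eqref{eq:pardegL} together with the classification of subbundles from Lemma \ref{lem:subbundles} and the two defining properties of the flag from Lemma \ref{lem:flag}. I expect the two numerical hypotheses to enter in exactly one case each: Gauss-Bonnet \eqref{eq:GB} should govern the summand $\mO(n-1)$, while the per-point stability inequalities \eqref{eq:S} should govern the degree-one subbundles.

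First I would dispose of the easy cases. If $\deg L \leq 0$, then since every weight $a_{i1}, a_{i2}$ is strictly positive and there are $n \geq 3$ points, the quantity subtracted in \eqref{eq:pardegL} is strictly positive, so $\pardeg L_* < \deg L \leq 0$. If $\deg L > 1$, then Lemma \ref{lem:subbundles}(a) forces $L = \mO(n-1)$; property (i) of the flag says $F_i \cap \mO(n-1) = 0$, so no $F_i$ is contained in $L$ and \eqref{eq:pardegL} reduces to $\pardeg L_* = (n-1) - \sum_i a_{i2}$. Substituting $a_{i2} = (1+\alpha_i)/2$ and using \eqref{eq:GB} in the equivalent form $\sum_i \alpha_i > n-2$ should give $\pardeg L_* < 0$.

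The remaining and most delicate case is $\deg L = 1$. Here Lemma \ref{lem:subbundles}(b) identifies $L = \Phi \cdot \mO(1)$ for some automorphism, but the crucial input is property (ii) of Lemma \ref{lem:flag}: no degree-one subbundle contains all the $F_i$, so there is at least one index $j$ with $F_j \not\subset L$. I would rewrite \eqref{eq:pardegL} using $a_{i1} = a_{i2} - \alpha_i$, so that only the set $S = \{i : F_i \subset L\}$ enters, through the term $\sum_{i \in S} \alpha_i$; since $S \subseteq \{1, \ldots, n\} \setminus \{j\}$, this sum is bounded by $\sum_{i \neq j} \alpha_i$. Feeding that bound into the rewritten formula and then invoking the stability inequality \eqref{eq:S} for the same index $j$ should yield $\pardeg L_* < 0$.

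The main obstacle is organising this degree-one case so that the correct inequality \eqref{eq:S} is applied. The point to notice is that, because $a_{i1} < a_{i2}$, the parabolic degree increases as $L$ absorbs more of the flag lines; hence the extremal (most dangerous) configuration is precisely the one ruled out by property (ii), and the single missing index $j$ is exactly what the point-by-point condition \eqref{eq:S} is designed to control.
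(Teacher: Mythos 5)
Your proposal is correct and follows essentially the same route as the paper's proof: reduce to positive-degree subbundles, handle $L=\mO(n-1)$ via property (i) of the flag and Gauss--Bonnet \eqref{eq:GB}, and handle $\deg L = 1$ by using property (ii) to produce an index $j$ with $F_j \not\subset L$ and then applying \eqref{eq:S} for that $j$. Your rewriting via $a_{i1}=a_{i2}-\alpha_i$ is just a cosmetic reorganisation of the same estimate the paper carries out directly.
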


\begin{proof}
	Because \(\pardeg L_{*} < \deg L\),
	it is enough to consider \(L \subset E\) with positive degree, as described in Lemma \ref{lem:subbundles}. 
	
	Suppose that \(L=\mO(n-1)\). By our choice of flag, since \(F_i \not\subset L\) for all \(i\), Equation \eqref{eq:pardegL} gives us
	\begin{align*}
	\pardeg L_{*} &= n-1 - \frac{1}{2} \sum_i (1+\alpha_i) \\
	&=
	-1+\frac{1}{2} \sum_i (1- \alpha_i) .
	\end{align*}
	We see that \(\pardeg L_{*}<0\) is equivalent to the Gauss-Bonnet condition \eqref{eq:GB}.
	
	Suppose that \(\deg L=1\). By our choice of flag there are at most \(n-1\) lines \(F_i \subset L\). Let \(1 \leq j \leq n\) be such that \(F_j \not\subset L\). Using Equation \eqref{eq:pardegL} we obtain
	\begin{align*}
	\pardeg L_{*} &\leq 1 - \frac{1+\alpha_j}{2} - \frac{1}{2} \sum_{i \neq j} (1-\alpha_i) \\
	&= \frac{1-\alpha_j}{2} - \frac{1}{2} \sum_{i \neq j} (1-\alpha_i) 
	\end{align*}
	and \(\pardeg L_{*}<0\) follows from the stability condition \eqref{eq:S}.
\end{proof}

\subsection{The connection}\label{sect:conect}
Kobayashi-Hitchin correspondence gives us the next.

\begin{proposition}
	There is a meromorphic unitary connection \(\nabla\) on \(E\) with simple poles at the points \(x_i\) and holomorphic otherwise. Close to \(x_i\) there is a holomorphic trivialization of \(E\) such that
	\begin{equation}\label{eq:modelconection}
	\nabla = d -  
	\begin{pmatrix}
	a_{i1} & 0 \\
	0 & a_{i2}
	\end{pmatrix} \frac{dt}{t} 
	\end{equation}
	and \(F_i \subset E_{x_i}\) is equal to the \(a_{i1}\)-eigenspace of the residue endomorphism.
\end{proposition}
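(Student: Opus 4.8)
The plan is to invoke the Kobayashi–Hitchin correspondence for parabolic bundles, which is the standard tool producing Hermitian–Einstein (i.e.\ harmonic/unitary) metrics on stable parabolic bundles. We have already shown in Proposition \ref{prop:stability} that \(E_*\) is stable, and we computed that \(\pardeg E_* = 0\). The correspondence (in the form of Mehta–Seshadri, or in the analytic form of Biquard and Mochizuki's work cited in the excerpt) then asserts the existence of a Hermitian metric \(h\) on \(E\) over the punctured sphere \(\CP^1 \setminus \{x_1, \dots, x_n\}\), adapted to the parabolic weights, whose Chern connection is flat (since the parabolic degree vanishes, the Einstein constant is zero). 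The adaptedness of \(h\) to the parabolic structure is exactly the condition that forces the connection to have logarithmic singularities at the \(x_i\) with the prescribed residue eigenvalues \(a_{i1}, a_{i2}\).

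The first step I would carry out is to state precisely which version of the correspondence is being used, matching Mochizuki's conventions (Chapter 3) already referenced in Section \ref{sect:parab}: a stable parabolic bundle of parabolic degree zero carries a unique (up to scale) adapted harmonic metric, and the associated Chern connection \(\nabla\) is flat and unitary with respect to \(h\). The second step is to translate the adaptedness of the metric into the local normal form \eqref{eq:modelconection}. Near a cone point \(x_i\), an adapted metric in a holomorphic frame compatible with the filtration behaves like \(\mathrm{diag}(|t|^{2a_{i1}}, |t|^{2a_{i2}})\), and a routine computation of the Chern connection of such a model metric yields precisely \(\nabla = d - \mathrm{diag}(a_{i1}, a_{i2})\, dt/t\), with simple (logarithmic) poles and semisimple residue. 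This is the content one extracts from the local model for adapted harmonic metrics; the diagonal entries are the weights by construction.

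The third and most delicate point is the identification of the eigenspaces: I must verify that the \(a_{i1}\)-eigenspace of the residue coincides with the flag line \(F_i\), rather than with the quotient \(E_{x_i}/F_i\). This is where the sign/ordering convention flagged in the Remark after Section \ref{sect:parab} does real work: in Mochizuki's conventions the \emph{smaller} weight \(a_{i1}\) is attached to the sub-object \(F_i\) of the filtration, and the larger weight \(a_{i2}\) to the quotient. Concretely, in the holomorphic frame diagonalizing the metric, the sub-bundle generated by the frame vector of weight \(a_{i1}\) is the one that extends the flag line \(F_i\), so the residue acts on \(F_i\) by \(a_{i1}\). I would make this explicit by writing the local holomorphic frame adapted to the filtration and reading off which eigenline of the residue lands in \(F_i\).

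The main obstacle I anticipate is not the algebra but the analytic input: one must be careful that the version of the Kobayashi–Hitchin correspondence invoked genuinely produces a connection with \emph{logarithmic} (simple) poles and \emph{semisimple} residues with the exact eigenvalues \(a_{i1}, a_{i2}\), and that the "unitary" (metric-compatible, flat) conclusion follows from parabolic degree zero. Since this is presented as a black-box consequence of the correspondence rather than reproved here, the honest version of the proof is essentially a citation to \cite{Mochizuki} (and \cite{Biquard, MS}) together with the local-model computation above; the real content of the Proposition is therefore matching the abstract output of the correspondence to the specific normal form \eqref{eq:modelconection} and the eigenspace assignment, which I would present as the body of the argument.
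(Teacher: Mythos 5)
Your first step matches the paper's: stability (Proposition \ref{prop:stability}) together with vanishing parabolic degree is fed into the Mehta--Seshadri/Kobayashi--Hitchin correspondence (\cite{MS, Biquard}) to produce a flat irreducible unitary connection \(\nabla\) compatible with the parabolic structure, and your eigenspace bookkeeping (smallest weight \(a_{i1}\) attached to the flag line \(F_i\)) agrees with the paper's conventions. The gap is in your second step, where you obtain the exact normal form \eqref{eq:modelconection} from ``a routine computation of the Chern connection'' of the model metric \(\mathrm{diag}(|t|^{2a_{i1}}, |t|^{2a_{i2}})\). The adapted harmonic metric is \emph{not} equal to this model metric; adaptedness only says it is mutually bounded with (asymptotic to) it, so computing the Chern connection of the model tells you nothing exact about \(\nabla\). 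What compatibility with the parabolic structure actually yields is the weaker statement that, in a holomorphic trivialization near \(x_i\),
\begin{equation*}
\nabla = d - A(t)\,\frac{dt}{t},
\end{equation*}
with \(A(t)\) holomorphic, \(A(0)\) having eigenvalues \(a_{i1}, a_{i2}\), and \(F_i\) equal to the \(a_{i1}\)-eigenspace of \(A(0)\). Passing from this to the constant diagonal form \eqref{eq:modelconection} requires a holomorphic gauge transformation killing all higher-order terms of \(A(t)\), and that is not automatic.

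This is precisely where the paper uses a hypothesis your argument never touches: the difference of residue eigenvalues \(a_{i2}-a_{i1}=\alpha_i\) lies in \((0,1)\), hence is not a nonzero integer, so the Fuchsian singularity is \emph{non-resonant}, and the normal form theorem for non-resonant Fuchsian singularities \cite[Theorem 16.16]{IY} supplies the desired gauge. Without non-resonance the conclusion is genuinely false, even with semisimple residue: for instance the connection
\begin{equation*}
d - \begin{pmatrix} 1 & t \\ 0 & 0 \end{pmatrix}\frac{dt}{t}
\end{equation*}
has diagonal residue with eigenvalues \(1,0\), yet its flat sections involve \(t\log t\), so its monodromy has a nontrivial unipotent part and it cannot be holomorphically gauged to \(d - \mathrm{diag}(1,0)\,dt/t\). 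Your proof as written would ``establish'' the constant diagonal form for arbitrary weight differences, which the example shows is impossible; the missing ingredient is the non-resonance argument, and it is the one place in this proposition where the hypothesis \(0<\alpha_i<1\) does real work.
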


\begin{proof}
	Since \(E_{*}\) is stable and has zero parabolic degree,
	the Mehta-Seshadri theorem (see \cite[Theorem 4.1]{MS} and \cite[Th\'eor\`eme 2.5]{Biquard}\footnote{See also \cite[Section 3]{Simpson} for the relation with logarithmic connections in the more general case of parabolic Higgs bundles.}) implies that there is a unique flat unitary irreducible connection \(\nabla\) compatible with the parabolic structure. We recall the properties of this connection, along the lines of \cite[Theorem 6.9]{Pan} specialized to the Riemann surface case.
	
	The compatibility with the parabolic structure (see \cite[Definition 6.8]{Pan}) implies that \(\nabla\) extends over the punctures with logarithmic singularities. More precisely, in a (hence every) local trivialization of \(E\) around \(x_i\) we have
	\[\nabla = d - A(t) \frac{dt}{t},\]
	where \(t\) is a (hence any) complex coordinate centred at \(x_i\) and \(A(t)\) is a holomorphic matrix valued function. 
	Moreover, the residue \(A(0) \in \End(E_{x_i})\) (which is independent from trivialization and local coordinate choices) has eigenvalues \(a_{i1}, a_{i2}\) and \(F_i\) is equal to the \(a_{i1}\)-eigenspace. 
	
	Since the difference of eigenvalues \(a_{i2}-a_{i1} = \alpha_i\) is non-integer, Equation \eqref{eq:modelconection} follows from the normal form theorem for non-resonant Fuchsian singularities \cite[Theorem 16.16]{IY}.
\end{proof}


\subsection{The foliation}\label{sect:fol}
Consider the projectivization \(\mathbb{P}(E)\) of the vector bundle \(E\) given by Equation \eqref{eq:vbE}. We obtain the Hirzebruch surface 
\begin{equation}
	\Sigma_{n-2} = \mathbb{P} \left( \mO \oplus \mO(n-2)\right) .
\end{equation}
Write \(\Pi: \Sigma_{n-2} \to \CP^1\) for the projection map with \(\CP^1\)-fibres.

The horizontal distribution of \(\nabla\) defines a holomorphic foliation \(\mF\) on the restriction of \(\mathbb{P}(E)\) over the open set
\[U = \CP^1 \setminus \{x_1, \ldots, x_n\} . \]
Let \(v \in \mathbb{P}(E)\) with
\(\Pi(v) \in U\),
take \(\tilde{v} \in E\) on the line represented by
\(v\) and let \(s\) be a flat local section of \(E\)
that takes the value \(\tilde{v}\) at \(\Pi(v)\). By definition, the leaf of \(\mF\) at \(v\) is locally given by the projection to \(\mathbb{P}(E)\) of the graph of \(s\). The foliation \(\mF\) is transversal to the \(\CP^1\)-fibres of \(\Pi\).

\begin{lemma}\label{lem:extF}
	\(\mF\) extends to a singular foliation on \(\Sigma_{n-2}\) that is tangent to fibres over the points \(x_i\). At each fibre \(\Pi^{-1}(x_i)\) the foliation \(\mF\) has two singularities \(v_{i1}\) (\(= F_i\)) and \(v_{i2}\).
\end{lemma}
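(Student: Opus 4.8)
The plan is to work entirely in the explicit normal form supplied by the preceding proposition, since all the assertions are local around each puncture $x_i$: over $U$ the foliation $\mF$ is already given, holomorphic and transverse to the fibres, so only its behaviour over the points $x_i$ needs analysis. Fix $i$ and choose the holomorphic trivialization of $E$ near $x_i$ in which $\nabla = d - \mathrm{diag}(a_{i1}, a_{i2})\,\frac{dt}{t}$ with $t$ a centred coordinate. Let $(v_1, v_2)$ be the induced linear fibre coordinates and $w = v_2/v_1$ the corresponding affine coordinate on the $\CP^1$-fibres of $\mathbb{P}(E)$, so that $w=0$ is the line spanned by the $a_{i1}$-eigenvector (that is, $F_i$) and $w=\infty$ is the line spanned by the $a_{i2}$-eigenvector.

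First I would write down the leaves directly. A local flat section $s=(s_1,s_2)$ satisfies $ds_j = a_{ij}\,\frac{dt}{t}\,s_j$, so along the projected graph $w=s_2/s_1$ one computes
\begin{equation*}
dw = \frac{ds_2}{s_1} - \frac{s_2\,ds_1}{s_1^2} = (a_{i2}-a_{i1})\,\frac{dt}{t}\,w = \alpha_i\,\frac{dt}{t}\,w .
\end{equation*}
Hence over the punctured disc $\mF$ is cut out by the vanishing of the $1$-form
\begin{equation*}
\omega = t\,dw - \alpha_i\,w\,dt .
\end{equation*}
The crucial observation is that although $\nabla$ has a pole, clearing the denominator multiplies $1/t$ by $t$ and produces an $\omega$ that is holomorphic across the whole coordinate patch, including the central fibre, with isolated reduced zero. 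This is exactly the statement that $\mF$ extends to a singular holomorphic foliation on $\Sigma_{n-2}$; patching these local extensions with the given foliation over $U$ gives the global object.

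Next I would read off the remaining assertions from $\omega$. Contracting with the vertical field gives $\omega(\p_w)=t$, which vanishes on $\{t=0\}$, so $\p_w$ lies in the kernel of $\omega$ along $\Pi^{-1}(x_i)$; thus $\mF$ is tangent to that fibre, which is itself a (punctured) leaf. The singularities on this patch are the common zeros of the coefficients of $\omega$, namely $t=0$ and $\alpha_i w=0$; since $\alpha_i\neq 0$ this is the single point $w=0$, i.e. $v_{i1}=F_i$. Passing to the other chart $u=1/w$ turns $\omega$ into $-u^{-2}\bigl(t\,du + \alpha_i u\,dt\bigr)$, whose reduced form $t\,du+\alpha_i u\,dt$ vanishes only at $u=0$, giving the second singularity $v_{i2}$, the $a_{i2}$-eigenline. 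These account for exactly the two claimed singular points.

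The only delicate step is the extension in the first paragraph: one must check that the pole of the connection does not survive in the foliation $1$-form. I expect this to be the main thing to get right, and the reason it works is that the residue is semisimple (diagonal in the chosen trivialization), so the ratio $w$ obeys the single linear equation $\frac{dw}{w}=\alpha_i\frac{dt}{t}$ rather than a genuinely singular system; multiplying through by $t$ yields a holomorphic $\omega$ with an isolated, reduced zero. That the $1$-form so obtained agrees over $U$ with the foliation defined by flat sections is automatic, since the computation of $\omega$ was carried out along precisely those flat graphs.
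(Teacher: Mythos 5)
Your proposal is correct and follows essentially the same route as the paper: both work in the normal-form trivialization near \(x_i\), pass to the affine fibre coordinate \(w=y_2/y_1\), and extend \(\mF\) across the central fibre by an explicit holomorphic object with an isolated zero at each eigenline. The only (immaterial) difference is that you present the foliation via the defining \(1\)-form \(\omega = t\,dw - \alpha_i w\,dt\), while the paper uses the dual description by the linear vector field \(t\,\p_t + \alpha_i y\,\p_y\) and the explicit leaves \(y = c\,t^{\alpha_i}\).
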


\begin{proof}
	Let \(t\) be a complex coordinate on the base centred at \(x_i\). Take a trivialization of \(E\) close to \(x_i\) defined by a holomorphic frame \(s_1, s_2\) such that \(\nabla\) is equal to the local model connection given by Equation \eqref{eq:modelconection}. 
	Set 
	\[v_{i1} = \C \cdot s_1(x_i) \mbox{ and } v_{i2} = \C \cdot s_2(x_i) .\]
	Let \((y_1, y_2)\) be the  fibre coordinates
	\( (y_1, y_2) \mapsto y_1 s_1 + y_2 s_2 .\)
	The flat sections are of the form 
	\[t \mapsto \begin{pmatrix}
	y_1 = c_1 t^{a_{i1}} \\
	y_2 = c_2 t^{a_{i2}}
	\end{pmatrix} \mbox{ with } c_1, c_2 \in \C .\]
	
	On the open set \(\mathbb{P}(\{y_1 \neq 0\}) \subset \mathbb{P}(E)\) we have local coordinates
	\((t, y = y_2/y_1)\) centred at \(v_{i1}\). In these coordinates the leaves of the foliation \(\mF\) are given by graphs \(y=ct^{\alpha_i}\) (here \(c \in \C\) and \(\alpha_i=a_{i2}-a_{i1}\)) corresponding to flat local sections of the trivial bundle with connection \(d - (\alpha_i/t) dt\).
	Equivalently, \(\mF\) is made of the orbits of the linear vector field \(t \p_t + \alpha_i y \p_y\). Similarly, on \(\mathbb{P}(\{y_2\neq 0\})\) we have coordinates \((t, y=y_1/y_2)\) centred at \(v_{i2}\) such that the leaves of \(\mF\) are orbits of the vector field \(t \p_t - \alpha_i y \p_y\). In other words, close to \(v_{i2}\) the foliation \(\mF\) is made of integral submanifolds of the horizontal distribution given by the connection \(d + (\alpha_i/t)dt\). 
\end{proof}

\begin{remark}
	Our (singular) foliation \(\mF\) on \(\Sigma_{n-2}\) is of Riccati type, meaning that it is transversal to the generic fibres of \(\Sigma_{n-2} \to \CP^1\). See \cite{LorayPerez} for the relation between Riccati foliations and projective structures on Riemann surfaces.
\end{remark}

\subsection{The section}\label{sect:sec}
The line bundle \(\mO(n-1) \subset E\) defines a section
\(\sigma\) of \(\mathbb{P}(E)\) with self-intersection
\begin{equation}\label{eq:selfint}
	\sigma^2 = - (n-2) .
\end{equation}
As a matter of fact, \(\sigma\) is the unique section of the Hirzebruch surface \(\mathbb{P}(E) =\Sigma_{n-2} \to \CP^1 \) with negative self-intersection.

\begin{lemma}\label{lem:transverse}
	The section \(\sigma\) is everywhere transversal to the foliation \(\mF\). In particular, \(\sigma\) does not go through any of the \(2n\) singularities of \(\mF\).
\end{lemma}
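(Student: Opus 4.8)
The plan is to reformulate transversality in terms of the second fundamental form of the sub-line-bundle \(\mO(n-1)\subset E\) with respect to \(\nabla\). Write \(L=\mO(n-1)\) and \(Q=E/L\cong\mO(1)\), and set \(D=\sum_i x_i\). Composing the restriction of \(\nabla\) to \(L\) with the projection \(E\to Q\) gives an \(\mO_{\CP^1}\)-linear second fundamental form \(\beta\colon L\to Q\otimes\Omega^1(\log D)\), that is, a global section \(\beta\in H^0\bigl(\CP^1,\Hom(L,Q)\otimes\Omega^1(\log D)\bigr)\); the logarithmic twist is present precisely to absorb the simple poles of \(\nabla\) at the \(x_i\). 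Over \(U\) the section \(\sigma=\mathbb{P}(L)\) is tangent to a leaf of \(\mF\) at a point above \(z\) exactly when the horizontal lift of \(T_z\CP^1\) stays tangent to \(L\) to first order, i.e. when \(\beta(z)=0\). Thus on \(U\) transversality is equivalent to the nonvanishing of \(\beta\).

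Next I would run a degree count. Since \(\deg\Hom(L,Q)=-(n-1)+1=2-n\) and \(\deg\Omega^1(\log D)=-2+n=n-2\), the line bundle \(\Hom(L,Q)\otimes\Omega^1(\log D)\) has degree \(0\) on \(\CP^1\) and so is isomorphic to \(\mO\). Hence \(\beta\) is a constant section: it either vanishes identically or is nowhere zero. It cannot vanish identically, for \(\beta\equiv0\) would say that \(L\) is preserved by \(\nabla\), contradicting the irreducibility of the unitary connection supplied by Mehta--Seshadri, which in turn rests on the stability of \(E_*\) established in Proposition \ref{prop:stability}. Therefore \(\beta\) is nowhere vanishing, and in particular \(\sigma\) is transverse to \(\mF\) over all of \(U\).

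It remains to treat the fibres over the \(x_i\), and this is where the nonvanishing of \(\beta\) as a \emph{logarithmic} object pays off. Working in the model frame \(s_1,s_2\) of Equation \eqref{eq:modelconection}, property (i) of the flag gives \(\sigma(x_i)\neq v_{i1}=F_i\), so locally \(L\) is spanned by \(\ell=u\,s_1+s_2\) with \(u\) holomorphic and \(\sigma(x_i)=[u(0)s_1+s_2]\). A direct computation gives \(\nabla\ell\equiv\bigl(tu'+\alpha_i u\bigr)\tfrac{dt}{t}\,\bar s_1\ \mathrm{mod}\ L\), so the value of \(\beta\) at \(x_i\) in the natural local frame is \(\alpha_i\,u(0)\). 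Since \(\beta\) is nowhere zero and \(\alpha_i\neq0\), we get \(u(0)\neq0\), i.e. \(\sigma(x_i)\neq v_{i2}\). Hence \(\sigma\) misses all \(2n\) singularities. Finally, over \(x_i\) the fibre is \(\mF\)-invariant and \(\sigma(x_i)\) is a regular point of \(\mF\), so the leaf through \(\sigma(x_i)\) is the fibre itself; as \(\sigma\) meets the fibre transversally it is transverse to \(\mF\) there as well.

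The main obstacle is the bookkeeping at the punctures: one must check that \(\beta\) is genuinely holomorphic after twisting by \(\Omega^1(\log D)\) (only simple poles occur) and that its leading logarithmic coefficient at \(x_i\) is the nonzero multiple \(\alpha_i u(0)\) of \(u(0)\), rather than a quantity that could vanish; this is exactly the point where the eigenvalue gap \(a_{i2}-a_{i1}=\alpha_i\) enters. As a shortcut for the global count one can instead invoke the Riccati tangency index formula: the foliation has \(T_\mF=\Pi^*\bigl(T_{\CP^1}(-D)\bigr)\), whence \(K_\mF\cdot\sigma=n-2\) and \(\mathrm{tang}(\mF,\sigma)=K_\mF\cdot\sigma+\sigma^2=(n-2)-(n-2)=0\), which forces \(\sigma\) to be everywhere transverse to \(\mF\) and disjoint from its singular set.
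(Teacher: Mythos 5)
Your proposal is correct, and your main argument takes a genuinely different route from the paper's. The paper proves the lemma purely numerically: it quotes Brunella's formula \(\mathrm{Tan}(\mF,\sigma)=\sigma^2-\sigma\cdot T_{\mF}\), computes \(T_{\mF}\cong -(n-2)\cdot\mathfrak{f}\) by lifting a holomorphic vector field from \(\CP^1\) (zeros on two fibres, simple poles on the \(n\) invariant fibres), and concludes \(\mathrm{Tan}(\mF,\sigma)=-(n-2)+(n-2)=0\), which simultaneously rules out tangencies at regular points and passages through singular points; the ``shortcut'' in your final paragraph is exactly this proof. Your second-fundamental-form argument is in essence the same computation made explicit: the zero divisor of \(\beta\in H^0\bigl(\Hom(L,Q)\otimes\Omega^1(\log D)\bigr)\) (with \(D=x_1+\cdots+x_n\)) is precisely the tangency divisor that Brunella's formula counts, and your degree count \((2-n)+(n-2)=0\) is the same arithmetic as \(\sigma^2-\sigma\cdot T_{\mF}=0\). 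What your route buys: it avoids citing foliation machinery, and it makes explicit a hypothesis the paper passes over in silence --- Brunella's tangency count is only defined for curves that are \emph{not} \(\mF\)-invariant, and excluding invariance of \(\sigma\) (i.e.\ \(\beta\equiv 0\)) is exactly where irreducibility of the Mehta--Seshadri connection, hence stability of \(E_*\), must enter; your use of it closes that loophole honestly. Your local computation at the punctures is also correct (\(\beta\) has local representative \(t u'+\alpha_i u\) against the logarithmic frame \(dt/t\), with value \(\alpha_i u(0)\) at \(t=0\)), and in your approach it is genuinely needed to convert nonvanishing of \(\beta\) at \(x_i\) into \(\sigma(x_i)\notin\{v_{i1},v_{i2}\}\), after which transversality along the invariant fibres follows as you say. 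What the paper's route buys in exchange: once Brunella's formula is granted, the vanishing of the single global count disposes of everything at once, with no local analysis at the \(x_i\) at all.
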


\begin{proof}
	Let \(T_{\mF}\) be the tangent bundle of \(\mF\), see \cite[Chapter 2]{Brunella}. Given a curve \(C \subset \Sigma_{n-2}\)
	the total number of tangencies of \(C\) with \(\mF\) is given by
	\begin{equation}\label{eq:tangencies}
		\mbox{Tan}(\mF, C) = C^2 - C \cdot T_{\mF} , 
	\end{equation}
	see \cite[Proposition 2.2]{Brunella}. 
	Lifting a holomorphic vector field \(V\) on \(\CP^1\) we obtain a section of \(T_{\mF}\) with zeros at the fibres
	over \(\{V=0\}\) and simple poles at the fibres over \(x_i\). It follows that, up to linear equivalence,
	\begin{equation}\label{eq:tF}
		T_{\mF} \cong -(n-2) \cdot \mathfrak{f}
	\end{equation}
	where \(\mathfrak{f}\) represents the class of a fibre
	(see \cite[p. 48]{Brunella}).
	Since \(\sigma\) is a section, we have \(\sigma \cdot \mathfrak{f} =1\). Equations \eqref{eq:tangencies} and \eqref{eq:tF} imply that
	\begin{equation}\label{eq:tansigma}
		\mbox{Tan}(\mF, \sigma) = \sigma^2 + (n-2) .
	\end{equation}
	Equations \eqref{eq:selfint} and \eqref{eq:tansigma} give us \(\mbox{Tan}(\mF, \sigma) = 0\). The statement of the lemma follows from the vanishing of the total number of tangencies.
\end{proof}

\section{Proof of Theorem \ref{thm}: The spherical metric}

\subsection{Existence}\label{sect:sphmet}
We use the section \(\sigma\) to pull-back the Fubini-Study metric on the fibres, as we explain next.

Let \(e_1, e_2\) be a local holomorphic unitary frame of \(E\) defined on an open subset 
\[V \subset U = \CP^1 \setminus \{x_1, \ldots, x_n\} .\] 
In this trivialization, the section \(\sigma\) is defined by a holomorphic function \(f: V \to \CP^1\). The fact that \(\sigma\) is transverse to \(\mF\) implies that \(f\) has nowhere zero derivative, hence we can pull-back the round metric on \(\CP^1\) by \(f\) to obtain a smooth conformal metric on \(V\) of constant curvature \(1\). The invariance of the Fubini-Study metric under the action of \(U(2)\) by linear unitary transformations implies that its pull-back is independent of the trivializing frame. Hence, we obtain a smooth conformal spherical metric \(g\) on \(U\).

\begin{lemma}\label{lem:coneangle}
	The metric \(g\) has a cone singularities of total angle \(2\pi\alpha_i\) at the points \(x_i\). Hence, the existence part of Theorem \ref{thm} follows.
\end{lemma}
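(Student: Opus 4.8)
The plan is to work in the local picture established in the proof of Lemma~\ref{lem:extF}. Near a point \(x_i\) I take the coordinate \(t\) centred at \(x_i\) and the holomorphic frame \(s_1, s_2\) in which \(\nabla\) has the model form \eqref{eq:modelconection}, with flat sections \((c_1 t^{a_{i1}}, c_2 t^{a_{i2}})\). The section \(\sigma\) corresponds to the summand \(\mO(n-1)\), which by Lemma~\ref{lem:extF} passes through the fibre point \(v_{i2}\)'s complementary structure; the key point from Lemma~\ref{lem:transverse} is that \(\sigma\) avoids both singularities \(v_{i1}, v_{i2}\), so in the local fibre coordinate \(y = y_2/y_1\) centred at \(v_{i1}\) the section \(\sigma\) is given by a holomorphic function \(y = h(t)\) with \(h(0) \neq 0\) and \(h(0) \neq \infty\).

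**Developing-map computation.**
First I would express the spherical metric \(g\) as the pull-back under the developing map. On \(U\), a unitary flat frame exists only multivaluedly near \(x_i\); the developing map in the coordinate \(y = y_2/y_1\) is obtained by passing from the holomorphic frame \(s_1, s_2\) to a flat frame. Since a flat section has the form \(y_1 = c_1 t^{a_{i1}}\), \(y_2 = c_2 t^{a_{i2}}\), the ratio along a flat frame differs from the holomorphic ratio \(y\) by the factor \(t^{a_{i2}-a_{i1}} = t^{\alpha_i}\). Concretely, the developing map is \(w = t^{\alpha_i} h(t)\) up to a unitary (hence isometric) change of the Fubini--Study coordinate, and the spherical metric is the pull-back of \eqref{eq:sphereformula} under \(w\). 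Because \(h\) is holomorphic and nonvanishing at \(t=0\), the map \(w = t^{\alpha_i} h(t)\) agrees to leading order with \(z \mapsto z^{\alpha_i}\) after the holomorphic coordinate change \(z = t\, h(t)^{1/\alpha_i}\) (a legitimate centred coordinate since \(h(0) \neq 0\)). This exhibits \(g\) near \(x_i\) precisely as the pull-back of the round metric by \(z \mapsto z^{\alpha_i}\), i.e.\ a cone of total angle \(2\pi\alpha_i\).

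**Main obstacle and how I would handle it.**
The hard part is controlling the behaviour of the unitary frame near \(x_i\): the genuinely unitary flat frame is multivalued, and I must check that the monodromy and the transition from the holomorphic trivialization to the unitary one do not spoil the leading \(t^{\alpha_i}\) asymptotics. The residue eigenvalues \(a_{i1}, a_{i2} \in (0,1)\) guarantee the correct metric growth, and unitarity forces the off-diagonal gluing terms to be subleading; the ratio of flat coordinates is \((c_2/c_1)\, t^{\alpha_i}\) up to a bounded unitary factor, so the conformal factor of \(g\) behaves like \(|z|^{2(\alpha_i - 1)}\), which is exactly the cone singularity of angle \(2\pi\alpha_i\). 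One must verify that the transversality of \(\sigma\) to \(\mF\), which forced \(h(0) \neq 0, \infty\), is what prevents a degenerate angle (either \(0\) or \(2\pi\)); this is precisely where Lemma~\ref{lem:transverse} enters.

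**Conclusion.**
Combining the interior smoothness already established with the local model \(z \mapsto z^{\alpha_i}\) at each \(x_i\), the metric \(g\) is a conformal spherical metric on \(\CP^1\) with a cone point of total angle \(2\pi\alpha_i\) at \(x_i\), which establishes the existence part of Theorem~\ref{thm}.
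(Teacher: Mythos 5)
Your proposal is correct and follows essentially the same route as the paper's proof: the local model from Lemma \ref{lem:extF}, the transversality of Lemma \ref{lem:transverse} to write \(\sigma\) as a nonvanishing holomorphic graph, the observation that transport along the leaves gives the (multivalued) developing map \(t \mapsto t^{\alpha_i}h(t)\), and the exact coordinate change \(z = t\,h(t)^{1/\alpha_i}\) reducing it to \(z \mapsto z^{\alpha_i}\). The only slip is notational: in your coordinate \(y = y_2/y_1\) centred at \(v_{i1}\) the leaves are \(y = ct^{\alpha_i}\), so the developing map along \(\sigma\) is \(t^{-\alpha_i}h(t)\) rather than \(t^{\alpha_i}h(t)\); this is harmless because the inversion \(w \mapsto 1/w\) is a Fubini--Study isometry (equivalently, work as the paper does in the coordinate \(y = y_1/y_2\) centred at \(v_{i2}\), where avoiding \(v_{i2}\) means exactly \(f(0)\neq 0\)).
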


\begin{proof}
	Let \((t, y)\) be coordinates on \(\mathbb{P}(E)\) centred at \(v_{i2}\) as in the proof of Lemma \ref{lem:extF}. The leaves of \(\mF\) are graphs \(y=ct^{-\alpha_i}\) of flat local sections of the trivial rank one bundle over \(\C\) with connection
	\[d + \frac{\alpha_i}{t} dt . \]
	Lemma \ref{lem:transverse} implies that
	the section \(\sigma\) is represented by a holomorphic map \(y=f(t)\) with \(f(0)\neq 0\).
	By construction, the metric \(g\) close to \(x_i\) is obtained by pulling-back the Fubini-Study metric on the fibre at \(\{t=1\}\), say, under the composition
	\[(t,0) \mapsto (t, f(t)) \mapsto \mbox{ fibre at } \{t=1\}  \]
	where the second arrow is the (multivalued) map given by transport along the leaves of \(\mF\).
	The composition results into a (multivalued) map from a neighbourhood of the base \(t=0\) to a neighbourhood of \(y=0\) over the fibre at \(\{t=1\}\) given by
	\begin{equation}\label{eq:mapfibre}
		t \mapsto t^{\alpha_i} f(t) .
	\end{equation}
	Since \(f(0) \neq 0\) we can take a local holomorphic branch of \(f^{1/\alpha_i}\) close to the origin. Introducing the local coordinate \(z= t f(t)^{1/\alpha_i}\) centred at \(x_i\), we
	deduce that \(g\) is the pull-back of the Fubini-Study metric by the map \(z \mapsto z^{\alpha_i}\).
\end{proof}

\subsection{Uniqueness}\label{sect:uniq}
Let \(g'\) be a conformal spherical metric on \(\CP^1\) with cone angles \(2\pi\alpha_i\) at the points \(x_i\). 
We will show that \(g'\) is equal to the metric \(g\) constructed in the previous Section \ref{sect:sphmet}.

\begin{lemma}
	The metric \(g'\) defines a holomorphic unitary connection \(\nabla'\) on a (trivial) rank \(2\) bundle \(E'_U\) over \(U=\CP^1 \setminus \{x_1, \ldots, x_n\}\) with holonomy at a positive loop \(\gamma_i\) around \(x_i\) conjugate to
	\begin{equation}\label{eq:holloop}
	\begin{pmatrix}
	\exp(\pi i (1-\alpha_i)) & 0 \\
	0 & \exp(\pi i (1+\alpha_i))
	\end{pmatrix} .
	\end{equation}
\end{lemma}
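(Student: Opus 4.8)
The plan is to recover $\nabla'$ from the developing map of $g'$ and to read off the local holonomy from the cone structure. Since $g'$ is a conformal metric of constant curvature $1$ on $U$, a standard fact gives a holomorphic developing map $w \colon \tilde{U} \to \CP^1$ to the round sphere (the orientation-preserving isometry group acts holomorphically), which is a local biholomorphism and is determined up to post-composition by $PSU(2) = \mathrm{Isom}^+(S^2)$. Its monodromy is a representation $\rho \colon \pi_1(U) \to PSU(2)$, and everything reduces to linearizing $\rho$ and computing it near each puncture.

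To produce the rank $2$ bundle I would use the second-order ODE attached to $w$. Locally write $w = \phi_1 / \phi_2$ and normalize the pair $(\phi_1, \phi_2)$ so that the Wronskian $\phi_1 \phi_2' - \phi_1' \phi_2$ equals the constant $1$; then $(\phi_1, \phi_2)$ is a basis of solutions of a linear equation $\phi'' + q\phi = 0$ and is well defined up to the action of $SL(2, \C)$. The multivalued holomorphic frame $(\phi_1, \phi_2)$ defines a flat holomorphic rank $2$ bundle $E'_U$ over $U$ together with a connection $\nabla'$ for which $(\phi_1, \phi_2)$ is flat; its monodromy is the lift of $\rho$ to $SL(2,\C)$ pinned down by the Wronskian normalization. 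Because $g'$ is an honest metric the monodromy preserves the round metric, hence lies in a conjugate of $SU(2) \subset U(2)$; this furnishes the flat Hermitian metric making $\nabla'$ unitary with $\nabla'^{0,1} = \bar\partial$. Finally $E'_U$ is smoothly (and, by Grauert, holomorphically) trivial, since $U$ is homotopy equivalent to a wedge of circles and $GL(2,\C)$ is connected.

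It remains to compute the holonomy around $\gamma_i$, the step carrying the real content. Near $x_i$ the cone angle $2\pi\alpha_i$ forces $w$ to be, up to a Möbius transformation and a local biholomorphism of the base, the map $z \mapsto z^{\alpha_i}$. Consequently the equation $\phi'' + q\phi = 0$ has a regular singular point at $x_i$, and the normalization $\phi_1 \phi_2' - \phi_1' \phi_2 = 1$ pins down the indicial exponents: writing $\phi_j \sim z^{\beta_j}$ one needs $\beta_1 - \beta_2 = \alpha_i$ (so that $\phi_1/\phi_2 \sim z^{\alpha_i}$) together with $\beta_1 + \beta_2 = 1$ (so that the Wronskian is a nonzero constant), giving $\beta_1 = a_{i2}$ and $\beta_2 = a_{i1}$ with $a_{i1}, a_{i2}$ the weights of \eqref{eq:weights}. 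Hence the holonomy of $\nabla'$ around $\gamma_i$ is conjugate to $\mathrm{diag}(e^{2\pi i a_{i1}}, e^{2\pi i a_{i2}}) = \mathrm{diag}(e^{\pi i(1-\alpha_i)}, e^{\pi i(1+\alpha_i)})$, as claimed.

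The main obstacle is precisely this local analysis: one must justify rigorously that near a cone point of angle $2\pi\alpha_i$ the developing map has the normal form $z^{\alpha_i}$, and that the extra $\tfrac12$ in $a_{ij} = \tfrac{1\mp\alpha_i}{2}$ — equivalently the sign $e^{\pi i} = -1$ distinguishing the stated holonomy from the naive rotation lift $\mathrm{diag}(e^{i\pi\alpha_i}, e^{-i\pi\alpha_i})$ — is the one forced by the Wronskian (equivalently, spin) normalization rather than a free choice. By contrast the passage from $\rho$ to the flat unitary bundle and the triviality of $E'_U$ are routine. A cleaner-looking alternative would lift $\rho$ directly to $SU(2)$, possible since $H^2(U;\mathbb{Z}/2)=0$, and select at each puncture the $-1$-lift of the rotation by $2\pi\alpha_i$; but one must then verify the consistency of these sign choices against the relation $\gamma_1\cdots\gamma_n = 1$ in $\pi_1(U)$, which is exactly the bookkeeping that the canonical Wronskian framing handles automatically.
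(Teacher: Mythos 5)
Your proposal is correct, and it reaches the conclusion by a genuinely different (more self-contained) route than the paper. The paper's proof is essentially a citation: it invokes \cite[Proposition 2.12]{MP} for the existence of a canonical lift \(\rho: \pi_1(U) \to SU(2)\) of the \(PSU(2)\)-monodromy of the developing map, with \(\rho(\gamma_i)\) in the stated conjugacy class, and then performs the same associated-bundle construction \(E'_U = \tilde{U} \times_{\pi_1(U)} \C^2\) that you describe. You instead prove the lifting statement by hand: the Wronskian-normalized solution pair of the Schwarzian equation \(\phi'' + q\phi = 0\) gives a rank \(2\) local system whose monodromy lies in \(SL(2,\C)\) and hence in \(SU(2)\) (the preimage of \(PSU(2)\) under \(SL(2,\C) \to PSL(2,\C)\) is exactly \(SU(2)\), since \(-I \in SU(2)\)), and the indicial computation \(\beta_1+\beta_2 = 1\), \(\beta_1 - \beta_2 = \alpha_i\) pins the local eigenvalues to \(e^{\pi i (1\mp\alpha_i)}\), which is \eqref{eq:holloop}. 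This is precisely the content the paper black-boxes, and your version makes transparent why the weights carry the extra \(\tfrac12\): it is the \((dz)^{-1/2}\) (spin) normalization — exactly the ``connection on \(1\)-jets of \((TX)^{1/2}\)'' formulation that the paper relegates to a footnote. Your approach buys self-containedness and automatic consistency of the sign choices with the relation \(\gamma_1\cdots\gamma_n = 1\); the paper's buys brevity. Two points you flag as obstacles are in fact harmless: (a) the normal form \(z \mapsto z^{\alpha_i}\) of the developing map at a cone point needs no justification here, since in this paper it is literally the definition of a cone point (see the paragraph following Theorem \ref{thm}); (b) the scalar ODE taken with respect to an affine coordinate is globally defined on \(U\) only if a puncture sits at \(\infty\), so either move \(x_n\) to \(\infty\) by a M\"obius map first or work with half-densities — in either case coordinate changes multiply solutions by single-valued factors \((dz_{\mathrm{new}}/dz_{\mathrm{old}})^{-1/2}\) on punctured discs, so the local monodromy conjugacy classes are unaffected.
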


\begin{proof}
	By \cite[Proposition 2.12]{MP}, the monodromy of \(g'|_U\) into \(SO(3)\) has a canonical lift \(\rho: \pi_1(U) \to SU(2)\) with \(\rho(\gamma_i)\) conjugate to \eqref{eq:holloop}. 
	This lift induces a holomorphic connection \(\nabla'\) on the rank two vector bundle 
	\[E'_U = \tilde{U} \times_{\pi_1(U)} \C^2\] 
	over \(U\), where 
	\(\pi_1(U)\) acts on the universal cover \(\tilde{U}\) by deck transformations and on \(\C^2\) by the  lift. The connection \(\nabla'\) is defined so that it pulls back to the trivial connection on \(\tilde{U} \times \C^2\).\footnote{See also \cite[Remark 2.1]{LorayPerez}. More intrinsically, a projective structure on a Riemann surface \(X\) induces a holomorphic connection on \(1\)-jets of \((TX)^{1/2}\).}
\end{proof}

\begin{lemma}
	There is a unique rank \(2\) holomorphic vector bundle \(E'\) over \(\CP^1\) such that its restriction to \(U\) is equal to \(E'_U\) and
	\(\nabla'\) extends as a logarithmic connection on \(E'\) with residues at the points \(x_i\) conjugate to
	\begin{equation*}
	A_i = \begin{pmatrix}
	a_{i1} & 0 \\
	0 & a_{i2}
	\end{pmatrix} 
	\end{equation*}
	where \(a_{i1}, a_{i2}\) are given by Equation \eqref{eq:weights}.
\end{lemma}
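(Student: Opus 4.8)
The plan is to treat this as a purely local problem at each puncture and then glue. Away from the points \(x_i\) the bundle \(E'\) is forced to coincide with \(E'_U\), so the only freedom — and the only thing to construct — is a locally free extension across each \(x_i\) together with the extension of \(\nabla'\) as a logarithmic connection with the prescribed residue. Once, on a punctured coordinate disc \(D_i^{*}\) around each \(x_i\), I produce an extension that restricts to \(E'_U\) on \(D_i^{*}\), gluing it to \(E'_U\) over the overlaps yields a locally free sheaf \(E'\) on all of \(\CP^1\) (the transition functions lie in \(GL_2(\mO)\)), and \(\nabla'\) glues automatically since its local extension agrees with \(\nabla'\) on \(D_i^{*}\).

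For local existence I would use Deligne's construction. Fix a centred coordinate \(t\) at \(x_i\) and let \(M_i\) be the monodromy of \(\nabla'\) around the positive loop \(\gamma_i\); by the previous lemma \(M_i\) is conjugate to \eqref{eq:holloop}, and since \(0<\alpha_i<1\) its eigenvalues \(\exp(\pi i(1\mp\alpha_i))\) are distinct, so \(M_i\) is diagonalizable. The matrix \(A_i=\mathrm{diag}(a_{i1},a_{i2})\) built from the weights \eqref{eq:weights} is the logarithm of \(M_i\) whose eigenvalues lie in the window \((0,1)\), and \(\exp(2\pi i A_i)=M_i\). Choosing a flat multivalued frame \(\sigma\) of \(E'_U\) over \(D_i^{*}\) that diagonalizes \(M_i\), I set \(\epsilon=\sigma\, t^{-A_i}\). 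This frame is single-valued: under \(\log t\mapsto \log t+2\pi i\) the factor \(t^{-A_i}\) acquires a right factor \(M_i^{-1}\) and \(\sigma\) a right factor \(M_i\), and since \(M_i=\exp(2\pi i A_i)\) commutes with \(t^{-A_i}\) the two cancel. Thus \(\epsilon\) is a single-valued holomorphic frame on \(D_i^{*}\), it extends the bundle across \(x_i\), and \(\nabla'\sigma=0\) gives \(\nabla'=d-A_i\,dt/t\) in this frame, so the residue is \(A_i\) as required.

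Uniqueness is the delicate point, and it is where the window condition \(a_{i1},a_{i2}\in(0,1)\) together with the non-resonance \(a_{i2}-a_{i1}=\alpha_i\notin\mathbb{Z}\) is essential. Suppose \(E'_1\) and \(E'_2\) are two extensions as in the statement; they agree with \(E'_U\) on \(U\), so the identity on \(E'_U\) is the only candidate isomorphism and it suffices to show it extends across each \(x_i\), which is a comparison of two \(\mO\)-lattices in the sheaf of meromorphic \(\nabla'\)-sections near \(x_i\). Diagonalizing \(M_i\) as above, non-resonance lets one put each connection in the model form \(d-A_i\,dt/t\), so each extension is generated near \(x_i\) by sections \(t^{-b_j}\sigma_j\), where the residue eigenvalue \(b_j\) must satisfy \(\exp(2\pi i b_j)=\lambda_j\) (the \(j\)-th monodromy eigenvalue) and \(b_j\in(0,1)\). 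But the interval \((0,1)\) contains exactly one lift of each \(\lambda_j\), forcing \(b_j=a_{ij}\) for both extensions; hence the two lattices coincide and \(E'_1=E'_2\) as extensions with connection. Equivalently, the transition matrix between the two local holomorphic frames is a single-valued flat gauge transformation of the form \(t^{A_i}C\,t^{-A_i}\) with \(C\) commuting with \(M_i\); commutation makes it the constant \(C\), which is automatically holomorphic and invertible at \(x_i\).

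The main obstacle I anticipate is making this uniqueness rigorous: one must show that normalizing the residue eigenvalues into the fixed window \((0,1)\) genuinely pins down the \(\mO\)-lattice. This rests on diagonalizing the local monodromy (using distinctness of its eigenvalues) and on non-resonance of \(A_i\), so that no logarithmic terms enter and the comparison of the two extensions reduces to comparing the scalar exponents \(t^{a_{ij}}\); the existence step, by contrast, is a routine verification of single-valuedness and of the model form of \(\nabla'\) in the frame \(\epsilon\).
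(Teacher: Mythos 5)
Your proposal is correct and takes essentially the same route as the paper: existence via the Deligne-type twisted flat frame \(\epsilon = \sigma\, t^{-A_i}\) (the paper's sections \(s_1, s_2\), unique up to constants), and uniqueness by reducing both extensions to the model form \(d - A_i\,dt/t\) via non-resonance and observing that the transition between the two model frames is a constant diagonal flat gauge transformation, hence extends across \(x_i\). This matches the paper's sketch (which cites Deligne, Proposition 5.4) step for step, with your version merely spelling out the lattice comparison in more detail.
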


\begin{proof}
	This is standard, we give a sketch idea, for details see
	Proposition 5.4 in \cite[p. 94]{Deligne}.
	Since \(\exp(2\pi i A_i)\) is conjugate to the holonomy of \(\nabla'\) about \(x_i\) (see Equation \eqref{eq:holloop}), we can find holomorphic sections \(s_1, s_2\) of \(E_U'\)
	on a small punctured disc around \(x_i\), uniquely determined up to multiplication by constant factors, such that in the associated trivialization
	\begin{equation}\label{eq:Ai}
		\nabla' = d - \frac{A_i}{t} dt .
	\end{equation}
	We extend \((E'_U, \nabla')\) over \(x_i\) using the trivialising frame \(s_1, s_2\). Clearly, the extended connection has logarithmic singularities of the required type.
	
	Suppose \((E'', \nabla'')\) is another such extension. 
	Close to \(x_i\) the two connections \(\nabla'\) and \(\nabla''\) are equivalent to \eqref{eq:Ai} and the identity isomorphism outside \(x_i\) (being a flat section of the endomorphism bundle with the induced connection) is represented by a diagonal matrix with constant entries. The constants clearly extend over the origin, showing that the two bundles \(E''\) and \(E'\) are equal.
\end{proof}

\begin{lemma}\label{lem:degree}
	 \(\deg E' =n\).
\end{lemma}

\begin{proof}
	The residue theorem for meromorphic connections (\cite[Corollary 17.35]{IY}) asserts that
	\[\deg E' = \sum_i (a_{i1} + a_{i2}) . \]
	In our case, \(a_{i1}+a_{i2} = 1\) for each \(i\).
\end{proof}

Same as before, the flat local sections of \(E'\) define a holomorphic foliation \(\mF'\) on the projectivization \(\mathbb{P}(E'|_U)\) transversal to the fibres.
At each point \(x_i\) the connection \(\nabla'\) is locally equivalent to the model given by Equation \eqref{eq:modelconection}.
The proof of Lemma \ref{lem:extF} shows that \(\mF'\) extends
to a singular foliation on \(\mathbb{P}(E')\) and the extension is tangent to the fibre over each \(x_i\) with two linearisable singularities locally equivalent to the ones of \(\mF\). 

\begin{lemma}\label{lem:sigma'}
	The developing map of \(g'\) defines a holomorphic section \(\sigma'\) of \(\mathbb{P}(E')\) that is everywhere transversal to \(\mF'\).
\end{lemma}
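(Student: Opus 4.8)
The plan is to manufacture $\sigma'$ directly from the developing map, verify transversality over $U$ using that a spherical metric is a local isometry, and then dispose of the cone points $x_i$ by the explicit local model of $\nabla'$ from Equation \eqref{eq:Ai}.

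First I would recall that the developing map of $g'$ is a holomorphic local isometry $\mathrm{dev}\colon \tilde{U}\to\CP^1$ onto the round sphere, equivariant with respect to the holonomy $\rho\colon \pi_1(U)\to SU(2)$ of $\nabla'$. Since a section of $\mathbb{P}(E'_U)=\tilde{U}\times_{\pi_1(U)}\CP^1$ is exactly a $\rho$-equivariant map $\tilde{U}\to\CP^1$, the developing map \emph{is} such a section $\sigma'$, and it is holomorphic because $g'$ is conformal. Over a simply connected piece of $U$, in a flat trivialization of $\nabla'$ the foliation $\mF'$ is the horizontal one $\{w=\mathrm{const}\}$ in the affine fibre coordinate $w$, while $\sigma'$ is the graph of $\mathrm{dev}$. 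Hence $\sigma'$ is transverse to $\mF'$ at a point of $U$ precisely when $\mathrm{dev}$ is an immersion there, which holds because $g'$ has constant curvature $1$ and is therefore a local isometry onto $(\CP^1, g_{\mathrm{round}})$.

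The substance of the argument is the local analysis at each $x_i$. I would work in the holomorphic frame $\hat{e}_1,\hat{e}_2$ of Equation \eqref{eq:Ai}, where $\nabla'=d-\tfrac{A_i}{t}\,dt$ with $A_i=\mathrm{diag}(a_{i1},a_{i2})$, using the affine fibre coordinate $y=y_1/y_2$ centred at $v_{i2}$, so that (as in Lemma \ref{lem:extF}) the leaves of $\mF'$ are $y=ct^{-\alpha_i}$ and the flat affine coordinate is $w=y\,t^{\alpha_i}$. The cone condition supplies a conformal coordinate $z=t\,h(t)$ with $h(0)\neq 0$ in which $\mathrm{dev}$ takes the standard form $\zeta=z^{\alpha_i}$, with $\zeta$ an affine coordinate for $g_{\mathrm{round}}$. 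Both $w$ and $\zeta$ are flat coordinates in which the local holonomy (see Equation \eqref{eq:holloop}) acts as the rotation by $e^{2\pi i\alpha_i}$ fixing $0,\infty$; matching the rotations forces $w=\mu\zeta$ for some $\mu\in\C^*$, the inversion $\zeta\mapsto\mu/\zeta$ being excluded since it reverses the rotation angle. Substituting $\zeta=z^{\alpha_i}$ and $w=y\,t^{\alpha_i}$ gives $y=\mu\,(z/t)^{\alpha_i}=\mu\,h(t)^{\alpha_i}$, holomorphic near $t=0$ with $y(0)=\mu\,h(0)^{\alpha_i}\in\C^*$. Thus $\sigma'$ extends holomorphically across $x_i$, and its value $\sigma'(x_i)$ avoids both singularities $v_{i1}$ (at $y=\infty$) and $v_{i2}$ (at $y=0$). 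For transversality, the leaves $y=ct^{-\alpha_i}$ are orbits of $t\,\p_t-\alpha_i y\,\p_y$, so on the graph $y=g(t):=\mu\,h(t)^{\alpha_i}$ the section and $\mF'$ are tangent exactly where $\alpha_i\,g(t)+t\,g'(t)=0$; at $t=0$ this equals $\alpha_i\,g(0)\neq 0$, so $\sigma'$ is transverse at $x_i$ too. Together with transversality over $U$, this gives transversality everywhere.

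The step I expect to be the main obstacle is the bookkeeping at $x_i$: identifying the flat affine coordinate $w=y\,t^{\alpha_i}$ attached to the holomorphic frame with the round-sphere coordinate $\zeta=z^{\alpha_i}$ in which the developing map is standard, and in particular pinning down the sign and orientation conventions so that the two are related by the scaling $w=\mu\zeta$ rather than by an inversion. Once this identification is secured, the holomorphic extension of $\sigma'$ across $x_i$ away from the two singular points of $\mF'$, and the tangency computation, are routine.
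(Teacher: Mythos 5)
Your outline coincides with the paper's: the developing map is \(\rho\)-equivariant and hence defines a holomorphic section of \(\mathbb{P}(E')|_U=\tilde{U}\times_{\pi_1(U)}\CP^1\), transversality over \(U\) follows because a developing map is an immersion, and the whole lemma reduces to showing that in the chart \((t,y)\) near \(v_{i2}\) the section is the graph of a holomorphic function which is finite and non-zero at \(t=0\) (your final tangency computation is fine, and matches what the paper leaves implicit). The gap is exactly at the step you yourself flag as the crux: excluding the inversion \(\zeta\mapsto\mu/\zeta\) ``since it reverses the rotation angle''. This argument is empty when \(\alpha_i=1/2\), i.e.\ cone angle \(\pi\), which is squarely allowed by Theorem \ref{thm} (three angles equal to \(\pi\) satisfy \eqref{eq:GB} and \eqref{eq:S}). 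For \(\alpha_i=1/2\) the projectivized local holonomy is \(\zeta\mapsto-\zeta\), which \emph{equals} its own inverse, so its centralizer in \(PGL(2,\C)\) contains every inversion \(\zeta\mapsto\mu/\zeta\) as well as every scaling; rotation-angle bookkeeping cannot distinguish the two cases. And the distinction is not cosmetic: in the inversion case one gets \(y=\mu\,t^{-2\alpha_i}h(t)^{-\alpha_i}=\mu\,t^{-1}h(t)^{-1/2}\), so \(\sigma'\) would extend across \(x_i\) \emph{through} the singular point \(v_{i1}\) of \(\mF'\), with a tangency there, and the conclusion of the lemma would fail. So, as written, your proof does not cover cone angle \(\pi\).

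The repair must use data that still separates the two fixed points when \(\alpha_i=1/2\), namely the \(SU(2)\) lift \eqref{eq:holloop} rather than its projectivization: the eigenvalues \(e^{2\pi i a_{i1}}=e^{\pi i(1-\alpha_i)}\) and \(e^{2\pi i a_{i2}}=e^{\pi i(1+\alpha_i)}\) are distinct for every \(\alpha_i\in(0,1)\) since \(a_{i2}-a_{i1}=\alpha_i\notin\mathbb{Z}\). The identification of \(\mathbb{P}(E')\) with the flat \(\CP^1\)-bundle over a punctured disc around \(x_i\) is induced by an identification of rank-two flat bundles \(E'|_{D^*}\cong\tilde{D}^*\times_\rho\C^2\); at that level the two invariant flat line subbundles carry distinct monodromy multipliers, and matching them (the flat sections \(t^{a_{ij}}s_j\) of the frame in \eqref{eq:Ai} are multiplied by \(e^{2\pi i a_{ij}}\) under a positive loop, while the \(\rho\)-eigensections are multiplied by the inverse eigenvalues, so the match uses \(a_{i1}+a_{i2}=1\)) shows that the fixed point \(\zeta=0\), to which the developing map \(z^{\alpha_i}\) tends, corresponds to the flat section \(\{y=0\}\) through \(v_{i2}\), never to \(\{y=\infty\}\). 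This forces \(w=\mu\zeta\) for \emph{all} \(\alpha_i\), after which your computation closes the proof. Alternatively one can follow the paper's route and bypass the coordinate matching: by the leaf-transport correspondence of Lemma \ref{lem:coneangle}, the pair (foliation, section) near \(x_i\) reproducing the developing map \(z\mapsto z^{\alpha_i}\) is the rank-one model \(d+\alpha_i\,dz/z\) with the constant section \(f\equiv 1\), which manifestly avoids both singularities.
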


\begin{proof}
	Let \(\tilde{U}\) be the universal cover of the punctured sphere \(U\) with the pull-back complex structure. Taking a branch of the developing map of \(g'|_U\) we obtain a holomorphic function \(\tilde{\sigma}: \tilde{U} \to \CP^1\) with nowhere zero derivative. The function
	\(\tilde{\sigma}\) is equivariant with respect to the actions of \(\pi_1(U)\) by deck-transformations and holonomy of \(\nabla'\) respectively, so it defines a section \(\sigma'\) of \(\mathbb{P}(E')|_U = \tilde{U} \times_{\pi_1(U)} \CP^1\) with the required properties.
	
	At a singularity \(x_i\)
	we have a centred complex coordinate \(z\) such that
	\(g'\) is the pull-back of the Fubini-Study metric by \(z \mapsto z^{\alpha_i}\). We are reduced to the model case of the trivial rank \(1\) bundle with connection
	\[d + \frac{\alpha_i}{z} dz . \]
	The (multivalued) developing map \(z \mapsto z^{\alpha_i}\) from a neighbourhood of the origin on the base to a neighbourhood of the origin at the fibre over \(1\) corresponds to the section of the trivial bundle given by \(f(z) \equiv 1\).
	From this local representation of the section we conclude that \(\sigma'\) extends holomorphically over \(x_i\) avoiding the two singularities of \(\mF'\).
\end{proof}

\begin{lemma}\label{lem:selfin}
	The section \(\sigma'\) of \(\mathbb{P}(E')\) has self-intersection equal to
	\begin{equation}\label{eq:square}
		\sigma'^2 = -(n-2) .
	\end{equation}
\end{lemma}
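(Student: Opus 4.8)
The plan is to transcribe the argument of Lemma \ref{lem:transverse} almost verbatim, now applied to the foliation \(\mF'\) and the section \(\sigma'\), using Brunella's tangency formula \eqref{eq:tangencies}. The crucial point is that the computation of the tangent bundle \(T_{\mF}\) recorded in \eqref{eq:tF} depended only on the number \(n\) of singular fibres and not on any specific feature of the bundle \(E\); it therefore transfers to \(\mF'\) without change, and in particular does not require prior knowledge of the splitting type of \(E'\).

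First I would invoke Lemma \ref{lem:sigma'}, which tells us that \(\sigma'\) is everywhere transversal to \(\mF'\) and avoids its singular points. Hence the total number of tangencies vanishes, \(\mbox{Tan}(\mF', \sigma') = 0\). Next I would compute \(T_{\mF'}\). As explained in the discussion preceding the statement, \(\mF'\) is a Riccati foliation on \(\mathbb{P}(E')\) whose singular fibres lie over the \(n\) points \(x_i\), with the same local models as \(\mF\). Lifting a holomorphic vector field \(V\) on \(\CP^1\) to a section of \(T_{\mF'}\) produces zeros along the fibres over the degree-two divisor \(\{V=0\}\) and simple poles along the \(n\) fibres over the \(x_i\); consequently \(T_{\mF'} \cong -(n-2) \cdot \mathfrak{f}\), exactly as in \eqref{eq:tF}.

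Finally, since \(\sigma'\) is a section we have \(\sigma' \cdot \mathfrak{f} = 1\), so substituting \(T_{\mF'} \cong -(n-2)\mathfrak{f}\) into \eqref{eq:tangencies} gives \(\mbox{Tan}(\mF', \sigma') = \sigma'^2 + (n-2)\), in parallel with \eqref{eq:tansigma}. Combining this with the vanishing of the total number of tangencies yields \(\sigma'^2 = -(n-2)\).

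There is no genuinely hard step here; the proof is essentially a copy of Lemma \ref{lem:transverse}. The one point requiring care—already secured in the paragraph before the statement—is that \(\mF'\) shares the singular-fibre structure of \(\mF\), which is what makes the identification \(T_{\mF'} \cong -(n-2)\mathfrak{f}\) legitimate despite our not yet knowing the isomorphism type of \(E'\). Indeed, this lemma is precisely what will later force \(\mathbb{P}(E')\) to be the Hirzebruch surface \(\Sigma_{n-2}\), since a negative section there must have self-intersection \(-(n-2)\).
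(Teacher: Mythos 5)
Your proposal is correct and follows exactly the paper's route: the paper likewise combines \(\mbox{Tan}(\mF', \sigma') = 0\) from Lemma \ref{lem:sigma'} with the observation that the proof of Lemma \ref{lem:transverse} transfers verbatim to give \(\mbox{Tan}(\mF', \sigma') = \sigma'^2 + (n-2)\). Your added remark that the computation of \(T_{\mF'}\) depends only on the number of singular fibres, not on the splitting type of \(E'\), is precisely the point that makes the transfer legitimate, and it matches the paper's logic (the splitting type is only determined afterwards, in Lemma \ref{lem:iso}).
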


\begin{proof}
	Lemma \ref{lem:sigma'} implies that \(\mbox{Tan}(\mF', \sigma') = 0\) and the proof of Lemma \ref{lem:transverse} shows that
	\[\mbox{Tan}(\mF', \sigma') = \sigma'^2 + (n-2) . \]
	Equation \eqref{eq:square} follows.
\end{proof}

\begin{lemma}\label{lem:iso}
	The vector bundle \(E'\) is isomorphic to \(E=\mO(1)\oplus \mO(n-1)\).
\end{lemma}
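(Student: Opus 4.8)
The plan is to combine Grothendieck's splitting theorem with the negative self-intersection of \(\sigma'\) recorded in Lemma \ref{lem:selfin}. First I would invoke Grothendieck to write \(E' \cong \mO(a) \oplus \mO(b)\) with \(a \leq b\); Lemma \ref{lem:degree} then pins down \(a + b = \deg E' = n\). Passing to projectivizations, \(\mathbb{P}(E') \cong \mathbb{P}(\mO \oplus \mO(b-a)) = \Sigma_{b-a}\), since the projectivization is unchanged under tensoring \(E'\) by a line bundle. So it remains only to determine the single integer \(k := b - a \geq 0\).

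Next I would extract \(k\) from the geometry of \(\sigma'\). A holomorphic section of \(\mathbb{P}(E')\) is the same datum as a saturated sub-line-bundle \(L \subset E'\), whose normal bundle is \(\Hom(L, E'/L)\); therefore
\[ \sigma'^2 = \deg \Hom(L, E'/L) = \deg E' - 2\deg L = n - 2 \deg L . \]
Comparing with Lemma \ref{lem:selfin}, which gives \(\sigma'^2 = -(n-2)\), forces \(\deg L = n-1\). This is a reassuring sanity check against the existence side, where \(\sigma\) arose from the degree-\((n-1)\) sub-line-bundle \(\mO(n-1) \subset E\).

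The key step is then to use that \(\sigma'^2 = -(n-2)\) is strictly negative (here I use \(n \geq 3\)). On a Hirzebruch surface \(\Sigma_k\) with \(k \geq 1\) the unique irreducible curve of negative self-intersection is the negative section, coming from the top summand \(\mO(b)\) and having self-intersection \(-k\), while every other section has non-negative self-intersection. Hence \(\sigma'\) must be this negative section, so \(k = b-a = n-2\); together with \(a+b = n\) this gives \(a = 1\) and \(b = n-1\), i.e. \(E' \cong \mO(1) \oplus \mO(n-1) = E\). I do not expect a genuine obstacle here: the only point requiring care is the standard classification of saturated sub-line-bundles of a split rank \(2\) bundle (equivalently, of sections of a Hirzebruch surface), which guarantees that a negative self-intersection is achieved only by the top summand and thereby rigidifies the splitting type.
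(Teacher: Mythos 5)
Your proof is correct and takes essentially the same route as the paper: Birkhoff--Grothendieck splitting \(E' \cong \mO(a)\oplus\mO(b)\), the degree count \(a+b=n\) from Lemma \ref{lem:degree}, and the negative self-intersection \(\sigma'^2=-(n-2)\) from Lemma \ref{lem:selfin} to pin down \(b-a=n-2\). The only difference is that you spell out the step the paper leaves implicit---why a section of negative self-intersection forces \(\mathbb{P}(E')=\Sigma_{n-2}\)---via the normal bundle \(\Hom(L,E'/L)\) and the classification of sections of a Hirzebruch surface, which is a welcome clarification rather than a divergence.
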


\begin{proof}
	By the Birkhoff-Grothendieck theorem, \(E'\) is isomorphic to a direct sum \(\mO(a) \oplus \mO(b)\) with \(a \leq b\). Lemma \ref{lem:degree} implies that \(a+b=n\). Lemma \ref{lem:selfin} implies that \(\mathbb{P}(E') = \Sigma_{n-2}\) and so \(b-a=n-2\). We conclude that \(a=1\) and \(b=n-1\).
\end{proof}

The unitary logarithmic connection \(\nabla'\) endows \(E'\) with a semi-stable parabolic structure \(E'_{*}\) such that \(\nabla'\) is adapted to \(E'_{*}\). The weights are given by Equation \eqref{eq:weights} and \(\mathbf{F}' = \{F'_1, \ldots F'_n\} \) are the \(a_{i1}\)-eigenspaces of the residues of \(\nabla'\). 

\begin{lemma}\label{lem:isopar}
	There is an isomorphism (unique up to scaling) of \(E'\) with \(E\) that takes the flag \(\mathbf{F}'\) to the flag \(\mathbf{F}\) given by Lemma \ref{lem:flag}.
\end{lemma}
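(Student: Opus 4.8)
The goal is to produce an isomorphism $E' \xrightarrow{\sim} E$ carrying $\mathbf{F}'$ to $\mathbf{F}$, unique up to scale. I would proceed in two stages: first fix an abstract isomorphism $E' \cong E$ (which exists by Lemma \ref{lem:iso}), and then use the $\Aut(E)$-action to normalize the flag. The key observation is that Lemma \ref{lem:flag} already establishes that $\mathbf{F}$ is the \emph{unique} flag up to $\Aut(E)$ satisfying conditions (i) and (ii), so the entire problem reduces to verifying that the transported flag $\mathbf{F}'$ satisfies those same two conditions.

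\textbf{Step 1.} I would begin by choosing any isomorphism $\psi: E' \xrightarrow{\sim} E$, which exists by Lemma \ref{lem:iso}, and transport the flag to get lines $\psi(F'_i) \subset E_{x_i}$. It now suffices to check that $\{\psi(F'_i)\}$ satisfies (i) and (ii) of Lemma \ref{lem:flag}; if so, that lemma produces a (unique up to dilation) automorphism $\Phi \in \Aut(E)$ taking $\{\psi(F'_i)\}$ to $\mathbf{F}$, and $\Phi \circ \psi$ is the desired isomorphism. The uniqueness up to scaling will follow from the uniqueness clause in Lemma \ref{lem:flag} together with the fact that the only automorphisms of $E$ fixing the flag $\mathbf{F}$ are the scalars.

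\textbf{Step 2.} The heart of the matter is translating the two combinatorial flag conditions into geometric properties of $\sigma'$ that I already control. Conditions (i) and (ii) are exactly the statements that no destabilizing line subbundle of positive parabolic degree passes through the flag lines. Concretely, the section $\sigma'$ of $\mathbb{P}(E')$ corresponds to the negative self-intersection curve (by Lemma \ref{lem:selfin}, $\sigma'^2 = -(n-2)$), so under $\psi$ it must match the unique such curve on $\Sigma_{n-2}$, namely the section determined by $\mathcal{O}(n-1)$. The flag line $F'_i$ is the $a_{i1}$-eigenspace of the residue, which by the analysis in Lemma \ref{lem:extF} is the singularity $v_{i1}$ of $\mathcal{F}'$; since Lemma \ref{lem:transverse} (applied to $\sigma'$, $\mathcal{F}'$) shows $\sigma'$ avoids all $2n$ singularities, the flag line $F'_i$ does \emph{not} lie on the curve $\sigma'$, i.e.\ $F'_i \not\subset \mathcal{O}(n-1)$ after transport. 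This is precisely condition (i). For condition (ii), I would argue that a degree-one line subbundle $L \subset E$ containing all $\psi(F'_i)$ would define a destabilizing parabolic subsheaf: by Equation \eqref{eq:pardegL} containing all $n$ flag lines forces $\pardeg L_* = 1 - \sum_i a_{i1}$, and one checks using the weights \eqref{eq:weights} and the Gauss-Bonnet inequality \eqref{eq:GB} that this contradicts semistability of $E'_*$.

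\textbf{Main obstacle.} The delicate point is the \emph{geometric translation} in Step 2, specifically making rigorous the identification of the eigenspace line $F'_i$ with the singularity $v_{i1}$ of the extended foliation and then invoking transversality of $\sigma'$ to conclude $F'_i \not\subset \mathcal{O}(n-1)$. One must be careful that the abstract isomorphism $\psi$ of Step 1 sends $\mathbb{P}(E')$ to $\Sigma_{n-2}$ compatibly with the foliation data, so that the singularities and the negative curve correspond as expected; this is where Lemma \ref{lem:selfin} pinning down $\sigma'^2$ does the essential work, since it forces $\psi(\sigma')$ to be \emph{the} negative section and not merely \emph{a} section. Once both conditions (i) and (ii) are verified, the existence and uniqueness of the flag-preserving isomorphism are immediate from Lemma \ref{lem:flag}, and no further computation is required.
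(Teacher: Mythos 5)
Your proposal is correct and follows essentially the same route as the paper's own proof: identify \(E'\) with \(E\) via Lemma \ref{lem:iso}, note that \(\sigma'\) must be the negative section corresponding to \(\mO(n-1)\), deduce condition (i) from \(\sigma'\) avoiding the singularities of \(\mF'\), deduce condition (ii) from the parabolic degree computation \(1-\frac{1}{2}\sum_i(1-\alpha_i)>0\) contradicting semi-stability via Gauss--Bonnet, and conclude with the uniqueness clause of Lemma \ref{lem:flag}. The only difference is that you spell out the identification of \(F'_i\) with the singularity \(v_{i1}\) and the uniqueness-up-to-scale bookkeeping more explicitly than the paper does, which is sound but not a different argument.
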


\begin{proof}
	Let us identify \(E'\) with \(E\) by an isomorphism, as guaranteed by Lemma \ref{lem:iso}. The line bundle given by \(\sigma'\) is equal to \(\mO(n-1)\). Since \(\sigma'\) does not go through the singularities of \(\mF'\), we conclude that \(\mathbf{F}' \cap \mO(n-1)\) is zero at all fibres \(E_{x_i}\). On the other hand, we claim that there is no degree one line bundle \(L' \subset E'\) such that \(\mathbf{F}' \subset L'\). Indeed, if that were the case then
	\[\pardeg L_{*}' = 1 - \frac{1}{2} \sum_i (1-\alpha_i) >0 \]
	by Gauss-Bonnet \eqref{eq:GB}, contradicting the semi-stability of \(E'_{*}\). We conclude that \(\mathbf{F}'\) satisfies properties \((i)\) and \((ii)\) of Lemma \ref{lem:flag} and the statement follows.
\end{proof}

\begin{lemma}
The spherical metric in Theorem \ref{thm} is unique, namely	\(g'=g\).
\end{lemma}

\begin{proof}
	Identify the parabolic bundles \(E'_{*}\) and \(E_{*}\) by an isomorphism as provided by Lemma \ref{lem:isopar}.
	Since the parabolic bundle is stable, the unitary connection given by the Mehta-Seshadri theorem is unique. We conclude that \(\nabla'=\nabla\) and, by construction, the two metrics \(g'\) and \(g\) are equal.
\end{proof}

\bibliographystyle{alpha}
\bibliography{BIB}

\begin{thebibliography}{BGJM19}

\bibitem[BGJM19]{Bartoluccietal}
Daniele Bartolucci, Changfeng Gui, Aleks Jevnikar, and Amir Moradifam.
\newblock A singular sphere covering inequality: uniqueness and symmetry of
  solutions to singular {L}iouville-type equations.
\newblock {\em Math. Ann.}, 374(3-4):1883--1922, 2019.

\bibitem[Biq91]{Biquard}
Olivier Biquard.
\newblock Fibr\'{e}s paraboliques stables et connexions singuli\`eres plates.
\newblock {\em Bull. Soc. Math. France}, 119(2):231--257, 1991.

\bibitem[Bru15]{Brunella}
Marco Brunella.
\newblock {\em Birational geometry of foliations}, volume~1 of {\em IMPA
  Monographs}.
\newblock Springer, Cham, 2015.

\bibitem[Del70]{Deligne}
Pierre Deligne.
\newblock {\em \'{E}quations diff\'{e}rentielles \`a points singuliers
  r\'{e}guliers}.
\newblock Lecture Notes in Mathematics, Vol. 163. Springer-Verlag, Berlin-New
  York, 1970.

\bibitem[Ere21]{eremenko}
Alexandre Eremenko.
\newblock Metrics of constant positive curvature with conic singularities. {A}
  survey.
\newblock {\em ar{X}iv:2103.13364}, 2021.

\bibitem[IY08]{IY}
Yulij Ilyashenko and Sergei Yakovenko.
\newblock {\em Lectures on analytic differential equations}, volume~86 of {\em
  Graduate Studies in Mathematics}.
\newblock American Mathematical Society, Providence, RI, 2008.

\bibitem[KW18]{KW}
Semin Kim and Graeme Wilkin.
\newblock Analytic convergence of harmonic metrics for parabolic {H}iggs
  bundles.
\newblock {\em J. Geom. Phys.}, 127:55--67, 2018.

\bibitem[LMP09]{LorayPerez}
Frank Loray and David Mar\'{\i}n~P\'{e}rez.
\newblock Projective structures and projective bundles over compact {R}iemann
  surfaces.
\newblock {\em Ast\'{e}risque}, (323):223--252, 2009.

\bibitem[LSS13]{LSS}
Frank Loray, Masa-Hiko Saito, and Carlos Simpson.
\newblock Foliations on the moduli space of rank two connections on the
  projective line minus four points.
\newblock In {\em Geometric and differential {G}alois theories}, volume~27 of
  {\em S\'{e}min. Congr.}, pages 117--170. Soc. Math. France, Paris, 2013.

\bibitem[LSX21]{LSX2}
Lingguang Li, Jijian Song, and Bin Xu.
\newblock Irreducible cone spherical metrics and stable extensions of two line
  bundles.
\newblock {\em Adv. Math.}, 388:Paper No. 107854, 36, 2021.

\bibitem[LT92]{LT}
Feng Luo and Gang Tian.
\newblock Liouville equation and spherical convex polytopes.
\newblock {\em Proc. Amer. Math. Soc.}, 116(4):1119--1129, 1992.

\bibitem[Moc06]{Mochizuki}
Takuro Mochizuki.
\newblock Kobayashi-{H}itchin correspondence for tame harmonic bundles and an
  application.
\newblock {\em Ast\'{e}risque}, (309):viii+117, 2006.

\bibitem[MP16]{MP}
Gabriele Mondello and Dmitri Panov.
\newblock Spherical metrics with conical singularities on a 2-sphere: angle
  constraints.
\newblock {\em Int. Math. Res. Not. IMRN}, (16):4937--4995, 2016.

\bibitem[MS80]{MS}
V.~B. Mehta and C.~S. Seshadri.
\newblock Moduli of vector bundles on curves with parabolic structures.
\newblock {\em Math. Ann.}, 248(3):205--239, 1980.

\bibitem[Pan09]{Pan}
Dmitri Panov.
\newblock Polyhedral {K}\"{a}hler manifolds.
\newblock {\em Geom. Topol.}, 13(4):2205--2252, 2009.

\bibitem[Rub14]{Rubinstein}
Yanir~A. Rubinstein.
\newblock Smooth and singular {K}\"{a}hler-{E}instein metrics.
\newblock In {\em Geometric and spectral analysis}, volume 630 of {\em Contemp.
  Math.}, pages 45--138. Amer. Math. Soc., Providence, RI, 2014.

\bibitem[Sim90]{Simpson}
Carlos~T. Simpson.
\newblock Harmonic bundles on noncompact curves.
\newblock {\em J. Amer. Math. Soc.}, 3(3):713--770, 1990.

\bibitem[ST83]{SullThu}
Dennis Sullivan and William Thurston.
\newblock Manifolds with canonical coordinate charts: some examples.
\newblock {\em Enseign. Math. (2)}, 29(1-2):15--25, 1983.

\bibitem[Tro91]{Troy}
Marc Troyanov.
\newblock Prescribing curvature on compact surfaces with conical singularities.
\newblock {\em Trans. Amer. Math. Soc.}, 324(2):793--821, 1991.

\end{thebibliography}

\end{document}